\newtheorem{lm}{Lemma}[section]
\newtheorem{theorem}[lm]{Theorem}
\newtheorem{cor}[lm]{Corollary}
\newtheorem{prop}[lm]{Proposition}
\theoremstyle{definition}
\newtheorem{defn}[lm]{Definition}
\theoremstyle{remark}
\newcommand{\BC}{{\mathbb {C}}}
\newcommand{\CN}{{\mathcal {N}}}
\newcommand{\RO}{{\mathrm {O}}}
\newcommand{\End}{{\mathrm{End}}}
\newcommand{\GL}{{\mathrm{GL}}}
\newcommand{\Hom}{{\mathrm{Hom}}}
\newcommand{\Mat}{{\mathrm{Mat}}}
\newcommand{\rn}{\mathrm{n}}
\newcommand{\rs}{\mathrm{s}}
\newcommand{\Spec}{{\mathrm{Spec}}}
\newcommand{\Sp}{{\mathrm{Sp}}}
\newcommand{\tr}{{\mathrm{tr}}}
\newcommand{\triv}{\mathrm{triv}}
\newcommand{\diag}{\operatorname{diag}}
\newcommand{\sgn}{\operatorname{sgn}}
\newcommand{\Span}{\operatorname{Span}}
\newcommand{\g}{\mathfrak g}
\newcommand{\gC}{{\mathfrak g}_{\C}}
\newcommand{\h}{\mathfrak h}
\renewcommand{\u}{\mathfrak u}
\newcommand{\gl}{\mathfrak g \mathfrak l}
\renewcommand{\sp}{\mathfrak s \mathfrak p}
\newcommand{\fM}{\mathfrak M}
\newcommand{\fN}{\mathfrak N}
\newcommand{\fX}{\mathfrak X}
\newcommand{\Z}{\mathbb{Z}}
\newcommand{\C}{\mathbb{F}}
\newcommand{\R}{\mathbb R}
\newcommand{\B}{\mathbf{B}}
\newcommand{\G}{\mathbf{G}}
\newcommand{\Y}{\mathbf{Y}}
\newcommand{\be}{\mathbf{e}}
\newcommand{\bac}{\backslash\!\!\backslash}
\renewcommand{\mid}{\,:\,}
\newcommand{\cf}{\emph{cf.}~}
\title[Closed orbits and descents]{Closed Orbits and Descents for Enhanced Standard Representations of Classical Groups}
\date{}
\author{Chen Liang}
\address{School of Mathematical Sciences, Zhejiang University\\
	Hangzhou, 310058, China}\email{12235009@zju.edu.cn}
\begin{document}
	\maketitle
	\begin{abstract} 
    Let $G=\GL_n(\C)$, $\RO_n(\C)$, or $\Sp_{2n}(\C)$ be one of the classical groups over an algebraically closed field $\C$ of characteristic $0$, let $\breve{G}$ be  the MVW-extension  of $G$,  and let $\g$ be the Lie algebra of $G$. 
    In this paper, we classify the closed orbits in the enhanced standard representation $\g\times E$ of $G$, 
    where $E$ is the natural representation if $G=\RO_n(\C)$ or $\Sp_{2n}(\C)$, and is the direct sum of the natural representation and its dual if $G=\GL_n(\C)$.
 Additionally, for every closed $G$-orbit in $\g\times E$, we prove that it is $\breve{G}$-stable, and determine explicitly the corresponding stabilizer group as well as the action on the normal space. 
	\end{abstract}
	\tableofcontents
	\section{Introduction and main results}
    Throughout this paper, let $\C$ be an algebraically closed field of characteristic 0.
    \subsection{Motivations}
Let $\G$ be a reductive algebraic group over a local field $F$ of characteristic $0$.  Function spaces for rational representations of $\G$ play an important role in the study of representations of $\G(F)$. One classical problem is the ``multiplicity one problem'': Given a larger group $\G'(F)$ and an irreducible smooth admissible representation $\pi$ of $\G'(F)$, is the dimension of $\Hom_{\G(F)}(\pi,\BC)$ at most $1$? By the Gelfand-Kazhdan criterion \cite{GK,SZ1} and the Harish-Chandra descent \cite{AG}, the multiplicity one problem can be reduced to proving that certain $\breve{\G}(F)$-equivariant generalized functions on a rational representation of $\G(F)$ must be $0$, where $\breve{\G}(F)$ is an extension of $\G(F)$ by $\{\pm 1\}$. Another problem is the existence of smooth transfer. Let $V$ be a rational representation of $\G$. Given another reductive group $\G'$ over $F$ and a rational representation $V'$ of $\G'$, suppose that there exists a matching of regular semismiple orbits between $V$ and $V'$. The smooth transfer of a Schwartz function $f$ on $V(F)$ is a Schwartz function $f'$ on $V'(F)$ such that $O_{\gamma}(f)=\Delta(\gamma,\gamma')O_{\gamma'}(f')$, whenever a regular semisimple $\gamma\in V(F)$ matches a regular semisimple $\gamma'\in V'(F)$, where $O_{\gamma}(f),O_{\gamma'}(f')$ are suitably defined orbit integrals, and $\Delta(\gamma,\gamma')$ is the transfer factor.

To study the function spaces of the rational representation $V(F)$, one needs to study the geometry of the action of $\G$ on $V$ first, in particular, the classification of closed orbits, the corresponding stabilizer group and the descendants (see Definition \ref{defn:descent}). In this article, we investigate these geometric properties for the enhanced standard representation of classical groups as well as their MVW-extensions (see \S\,\ref{sec:enrepn} for the definitions). Our results may be applied in the proof of multiplicity one theorem and the existence of smooth transfer. In the proof of multiplicity one theorem \cite{AGb,AGRS,SZ}, as pointed out in \cite{AGb}, by applying \cite{AG}*{Theorem 3.2.1}, one of our results (Theorem \ref{thm:mainmvwsimple}) can provide a direct proof of \cite{AGb}*{Proposition 3.2.1}, \cite{AGRS}*{Propositions 3.2, 5.2} and \cite{SZ}*{Propositions 7.1 and 7.2}. In the proof of the existence of smooth transfer \cite{Zha14, Xue19, CZ21} for Jacquet-Rallis relative trace formulas, the action on the general linear side is reduced to the enhanced standard representation of $\GL_n$, and its descendants are used for further reduction.

	\subsection{Enhanced standard representations} \label{sec:enrepn}
 Let $G$ be one of the following classical groups:
	\begin{equation}
    \label{eq:classicalgroups}
		\GL_n(\C),\quad\RO_n(\C)\quad\text{and}\quad\Sp_{2n}(\C)\qquad (n\ge 0),
	\end{equation}
	where $\GL_n(\C)$ is the general linear group of rank $n$ over $\C$, 
	\begin{equation*}
		\RO_n(\C)=\left\{g\in\GL_n(\C)\mid g^t\alpha_ng=\alpha_n\right\}
	\end{equation*}
    is the orthogonal group, 
	and
	\begin{equation*}
		\Sp_{2n}(\C)=\left\{g\in\GL_{2n}(\C)\mid g^t\beta_{2n}g=\beta_{2n}\right\}
	\end{equation*}
    is the symplectic group. Here,
    \begin{equation*}
        \alpha_{n}=\left(\begin{matrix}
				&I_{\frac{n}{2}}\\I_{\frac{n}{2}}&
			\end{matrix}\right),\text{ if n even},\quad\alpha_n=
            \left(\begin{matrix}
				1&&\\&&I_{\frac{n-1}{2}}\\&I_{\frac{n-1}{2}}&
			\end{matrix}\right),\text{ if n is odd},
    \end{equation*}
    and
    \begin{equation*}
        \beta_{2n}=\left(\begin{matrix}
			&I_n\\-I_n&
		\end{matrix}\right).
    \end{equation*}
    
    Write $\breve{G}=G\rtimes\{\pm 1\}$ for the \emph{MVW-extension} of $G$ (cf.\,\cite{MVW},\cite{Sun}), where 
    \begin{equation*}
   (-1).g   = \begin{cases*} 
   g^{-t}, & if $G=\GL_n(\C)$,\\
   \alpha_{n}g\alpha_{n},& if $G=\RO_n(\C)$,\\
\alpha_{2n}g\alpha_{2n}, & if $G=\Sp_{2n}(\C)$. 
        \end{cases*}
    \end{equation*}
    In this paper, we view $\breve{G}$ and $G$ as algebraic groups over $\C$. 
            %We often identify $G$ as a subgroup of $\breve{G}$, and regard a rational representation of $\breve{G}$ as a rational representation of $G$ by taking restriction. 

	Let $\g=\mathfrak{gl}_n(\C)$, $\mathfrak{o}_n(\C)$, and $\mathfrak{sp}_{2n}(\C)$  be the Lie algebras of $G=\GL_n(\C)$, $\RO_n(\C)$, and $\Sp_{2n}(\C)$, respectively. Define an action of $\breve{G}$ on $\g$ by letting 
    \begin{equation*}
    (g,1).X=gXg^{-1}\quad \text{and}\quad    (g,-1).X=\begin{cases*}
            gX^tg^{-1},&if $G=\GL_n(\C)$,\\
            -g\alpha_{n}X\alpha_{n}g^{-1},&if $G=\RO_n(\C)$,\\
            -g\alpha_{2n}X\alpha_{2n}g^{-1},&if $G=\Sp_{2n}(\C)$.
        \end{cases*}
    \end{equation*}
   Additionally, we define an action of $\breve{G}$ on  the space 
	\begin{equation*}
		E:=\begin{cases*}
			\C^{n\times 1}\times \C^{1\times n},&if $G=\GL_n(\C)$,\\
			\C^{n\times 1},&if $G=\RO_n(\C)$,\\
            \C^{2n\times 1}, &if $G=\Sp_{2n}(\C)$
		\end{cases*}
	\end{equation*}
as follows: 

\begin{equation*}
    (g,\delta).(u,v)=\begin{cases*}
        (gu,vg^{-1}),&if $G=\GL_n(\C)$ and $\delta=1$,\\
        (-gv^t,-u^tg^{-1}),&if $G=\GL_n(\C)$ and $\delta=-1$,
    \end{cases*}
\end{equation*}
and
\begin{equation*}
    (g,\delta).u=\begin{cases*}
        gu,&if $G=\Sp_{2n}(\C)$ or $\RO_n(\C)$ and $\delta=1$,\\
        -g\alpha_{2n}u,&if $G=\Sp_{2n}(\C)$ and $\delta=-1$,\\
        -g\alpha_nu,&if $G=\RO_n(\C)$ and $\delta=-1$
    \end{cases*}
\end{equation*}
    Here, $\C^{p\times q}$ ($p,q\ge 1$) denotes the space of $p\times q$-matrices over $\C$. 
     
	Let $\breve{G}$ act on $\g\times E$ diagonally, so that it is a rational representation of $\breve{G}$ (and $G$ by taking restriction). 
    We call this representation of $\breve{G}$ (resp. $G$) the \emph{enhanced standard representation} of $\breve{G}$ (resp. $G$). 
 The main goal of this paper is to classify the closed $G$-orbits and $\breve{G}$-orbits in  $\g\times E$, and determine their corresponding stabilizer subgroups.

  \subsection{Related works}
     Denote by $\CN_{\g}$ the null cone of $\g$, which consists of all nilpotent matrices in $\g$. The closed $G$-orbits in $\g$ as well as the $G$-orbits in $\CN_{\g}$ have been completely classified (see \cite{CM} for example). It is known that every  closed $G$-orbit in $\g$ is $\check{G}$-stable (see \cite{MVW}). In \cite{Kato}, Kato considered an exotic nilpotent cone and derived the Deligne–Langlands theory for those exotic nilpotent orbits. To compute the local intersection cohomology of orbit closures in the exotic nilpotent cone, Achar and Henderson studied in \cite{AH08} the so-called ``enhanced nilpotent cone'' $\CN_{\gl_n(\C)}\times\C^{n\times 1}$ and classified its $\GL_n(\C)$-orbits.

  On the other hand, the  $G$-orbits in $\g\times E$ or $\CN_\g\times E$ have been studied in literature for various motivations. 
In \cite{RS}, for the purpose of proving the multiplicity one conjectures, Rallis and Schiffmann  studied the enhanced standard representation $\g\times E$ of $G$, and gave a criterion for a $G$-orbit in $\g\times E$ to be closed. In \cite{NO}, to generalize the results of \cite{Kato,AH08}, Nishiyama and Ohta determined regular semisimple $\GL_n(\C)$-orbits and the structure of the null cone in $\gl_n(\C)\times\left(\C^{n\times 1}\right)^p\times\left(\C^{1\times n}\right)^q$. In order to  generalize Ohta's conditions in \cite{Oht}, Nishiyama gave in \cite{Nis14} certain sufficient conditions for the map between orbit spaces induced by the inclusions of algebraic groups and varieties to be injective, and showed that the natural embedding of $\sp_{2n}(\C)\times\C^{2n\times 1}\hookrightarrow\gl_{2n}(\C)\times\C^{2n\times 1}\times\C^{1\times 2n}$ induces an injection of orbit spaces.

In what follows, we outline the main results of this paper. 
 
	\subsection{Closed orbits}
	\label{sec:closed}
	For $g_1\in\gl_{m_1}(\C),\dots,g_b\in\gl_{m_b}(\C)$, denote by $\diag(g_1,\dots,g_b)$ the matrix
	\begin{equation*}
		\begin{matrix}
			\left(\begin{matrix}
				g_1&&\\&\ddots&\\&&g_b
			\end{matrix}\right)\in\gl_{m_1+\dots+m_b}(\C).
		\end{matrix}
	\end{equation*}
	
	 To classify the closed $G$-orbits in $\g\times E$, we first study the closed $G$-orbits in $\CN_{\g}\times E$. For this purpose, we construct a subset $\fN_G$ of $\CN_{\g}\times E$ as follows:

	If $G=\GL_n(\C)$, then $\fN_G$ consists of all elements in $\CN_\g\times E$ which have the  form
		\begin{equation}
			\label{eq:closedorbitnullconeGL}
			x(k,y_1,\dots,y_k)=(\diag(J_k,0_{(n-k)\times(n-k)}),(y_1\dots,y_k,0\dots,0)^t,(1,0,\dots,0)),
		\end{equation}
where $k\geq 0$,  $y_1,\dots,y_k\in\C$ with $y_k\neq 0$, and \begin{equation*}
	J_0=0\quad\text{and}\quad	J_k=\left(\begin{matrix}
			0&1&&\\&\ddots&\ddots&\\&&0&1\\&&&0
		\end{matrix}\right)\ (k\ge 1).
	\end{equation*}
        
    If $G=\Sp_{2n}(\C)$, then $\fN_G$ consisting of all pairs $x=(X,u)\in\CN_\g\times E$ such that
		\begin{equation}
			\label{eq:nilpotentclosedspm}
			X=\left(\begin{matrix}
				\begin{array}{c:c}
					\diag(J_k,0_{(n-k)\times(n-k)})&\be_{kk}(n)\\\hdashline
					0&\diag(-J_k^t,0_{(n-k)\times(n-k)})
				\end{array}
			\end{matrix}\right)
		\end{equation}
		and
		\begin{equation}
			\label{eq:nilpotentclosedspv}
			u=(\underbrace{u_1,\dots,u_k}_k,\underbrace{0,\dots,0}_{n-k},\underbrace{u_{n+1},\dots,u_{n+k}}_k,\underbrace{0,\dots,0}_{n-k})^t,
		\end{equation}
        where $k\geq 0$, $u_1,\dots,u_k,u_{n+1},\dots,u_{n+k}\in\C$ with $u_{n+1}\neq 0$, and $\be_{i,j}(n)$ denotes the $n\times n$-matrix whose $(i,j)$-entry is 1 and other entries are zero. For $x\in\fN_G$ above, define $r(x)=k$.
        
        If $G=\RO_{2n+1}(\C)$, then $\fN_G$ consists of all pairs $(X,u)\in\CN_\g\times E$ such that
		\begin{equation}
			\label{eq:nilpotentclosedooddm}
			X=\left(\begin{matrix}
				\begin{array}{c:c:c:c:c}
					0&\begin{matrix}
						0&\dots&0
					\end{matrix}&\begin{matrix}
						0&\dots&0
					\end{matrix}&\begin{matrix}
						0&\ldots&0&1
					\end{matrix}&\begin{matrix}
						0&\dots&0
					\end{matrix}\\\hdashline
					\begin{matrix}
						0\\\vdots\\0\\-1
					\end{matrix}&J_k&0&0&0\\\hdashline
					\begin{matrix}
						0\\\vdots\\0
					\end{matrix}&0&0_{(n-k)\times(n-k)}&0&0\\\hdashline
					\begin{matrix}
						0\\\vdots\\0
					\end{matrix}&0&0&-J_k^t&0\\\hdashline
					\begin{matrix}
						0\\\vdots\\0
					\end{matrix}&0&0&0&0_{(n-k)\times(n-k)}
				\end{array}
			\end{matrix}\right)
		\end{equation}
		and
		\begin{equation}
			\label{eq:nilpotentclosedooddv}
		    u=(u_1,\underbrace{u_2,\dots,u_{k+1}}_k,\underbrace{0,\dots,0}_{n-k},\underbrace{u_{n+2},\dots,u_{n+k+1}}_k,\underbrace{0,\dots,0}_{n-k})^t,
		\end{equation}
        where $k\ge 0$, and $u_1,\dots,u_{k+1},u_{n+2},\dots,u_{n+k+1}\in\C$ with $u_{n+2}\neq 0$. For $x\in\fN_G$ above, define $r(x)=k$.
        
        If $G=\RO_{2n}(\C)$, then $\fN_G$ consists of all pairs $(X,u)\in\CN_\g\times E$ such that
		\begin{equation}
			\label{eq:nilpotentclosedoevenm}
			X=\left(\begin{matrix}
				\begin{array}{c:c}
					\diag(J_k,0_{(n-k)\times(n-k)})&\be_{k-1,k}(n)-\be_{k,k-1}(n)\\\hdashline
					0&\diag(-J_k^t,0_{(n-k)\times(n-k)})
				\end{array}
			\end{matrix}\right)
		\end{equation}
		and
		\begin{equation}
			\label{eq:nilpotentclosedoevenv}
			u=(\underbrace{u_1,\dots,u_k}_k,\underbrace{0,\dots,0}_{n-k},\underbrace{u_{n+1},\dots,u_{n+k}}_k,\underbrace{0,\dots,0}_{n-k})^t,
		\end{equation}
        where $k\ge 0$, and $u_1,\dots,u_k,u_{n+1},\dots,u_{n+k}$ with $u_{n+1}\neq 0$. For $x\in\fN_G$ above, define $r(x)=k$.

        The following theorem characterizes the closed $G$-orbits in $\CN_\g\times E$ whose proof is given in Section \ref{sec:4}.
	\begin{theorem}
		\label{thm:closedorbitsnullcone}
		For every $x\in\fN_G$, $Gx$ is a closed $G$-orbit in $\CN_\g\times E$. Conversely, each closed $G$-orbit in $\CN_{\g}\times E$ has such a form.
	\end{theorem}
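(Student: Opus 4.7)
The theorem has two halves --- closedness of the orbit of each $x\in\fN_G$, and existence of an $\fN_G$-representative in every closed orbit --- which I would approach by different methods: Matsushima's criterion for the former, and an $\mathfrak{sl}_2$-triple analysis combined with Hilbert--Mumford for the latter.

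For the first half, I compute the stabilizer $G_x$ explicitly. The condition $g.X = X$ confines $g$ to the centralizer $Z_G(X)$; because $X$ in $\fN_G$ has a very rigid shape (a single Jordan-like block of size $k$ or $2k$, sitting on top of a trivial block of complementary size), $Z_G(X)$ has a transparent block form. Imposing the remaining conditions $g.u = u$ (and $v.g^{-1} = v$ when $G = \GL_n$) rigidifies the non-trivial block entirely and forces $g$ to act only on the trivial block as an arbitrary element of the corresponding smaller classical group of the same type as $G$. The stabilizer is therefore a smaller classical group --- isomorphic to $\GL_{n-k}$, $\Sp_{2(n-k)}$, or an appropriate $\RO$-group --- and in particular reductive; Matsushima's criterion then yields closedness of $Gx$ in $\g \times E$, and a fortiori in $\CN_\g \times E$.

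For the converse, let $(X,u) \in \CN_\g \times E$ lie in a closed $G$-orbit. I first place $X$ in a Jordan normal form (compatible with the bilinear form, if any) by $G$-action. Next, I embed $X$ in an $\mathfrak{sl}_2$-triple $(H, X, Y) \subset \g$ via Jacobson--Morozov and consider the one-parameter subgroup $\lambda(t) = t^H\colon\C^\times \to G$, for which $\lambda(t).X = t^2 X$ and hence $\lim_{t \to 0}\lambda(t).X = 0$. If $X \neq 0$ and $G(X,u)$ is closed, the orbit contains no point with $\g$-coordinate zero, so the limit $\lim_{t \to 0}\lambda(t).(X,u)$ must fail to exist; this forces $u$ to have a nonzero component in a strictly negative $H$-weight space of $E$. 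Running the criterion over all $G$-conjugates of $\lambda$ --- equivalently, the optimal 1-parameter subgroups of Kempf --- pins down the Jordan type of $X$ and the weight pattern of $u$ to the shapes appearing in $\fN_G$, namely a single ``long'' block carrying a lowest-weight component of $u$; the residual freedom is absorbed by the $Z_G(X)$-action, producing an explicit $\fN_G$-representative. As a cross-check, the $G$-invariants $v X^j u$ (for $\GL_n$) or their analogues involving the bilinear form (for $\Sp_{2n}$ or $\RO_n$) parametrize closed orbits and are realized by the $\fN_G$-representatives.

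The main obstacle is the Hilbert--Mumford reduction: while extracting \emph{some} negative $H$-weight component of $u$ from closedness is immediate, pinning down the exact combinatorial shape in $\fN_G$ --- namely a single long Jordan block with $u$ meeting it only at the prescribed position --- requires ruling out Jordan types with several comparable blocks and various placements of $u$, by combining Hilbert--Mumford along many 1-parameter subgroups simultaneously. The $\GL_n$ case is additionally delicate because $E = \C^n \oplus \C^{n \ast}$ couples two summands via the invariants $v X^j u$, and one must verify that the condition $y_k \neq 0$ defining $\fN_G$ is equivalent to strict Hilbert--Mumford inequality along every destabilizing 1-parameter subgroup. In the orthogonal and symplectic cases, the bilinear-form compatibility restricts the $\mathfrak{sl}_2$-triples and introduces sign conventions in the weight decomposition which must be tracked carefully.
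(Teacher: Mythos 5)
There is a genuine gap in the first half of your argument. Matsushima's criterion states that, for a reductive group acting on an affine variety, the orbit $Gx$ is an \emph{affine} variety if and only if the stabilizer $G_x$ is reductive; it does \emph{not} say that a reductive stabilizer forces the orbit to be closed. The implication you invoke is false: for $\C^{\times}$ acting on $\C^{2}$ by $t.(x,y)=(tx,t^{-1}y)$, the orbit of $(1,0)$ has trivial (hence reductive) stabilizer and is affine, but is not closed. So computing $G_x\simeq\GL_{n-k}(\C)$, $\Sp_{2(n-k)}(\C)$, etc.\ and observing reductivity proves nothing about closedness. The correct tool here is Luna's criterion (Theorem \ref{thm:Lunacriterion}): taking $K=G_x$ (reductive, with $x\in(\g\times E)^{K}$), closedness of $Gx$ is equivalent to closedness of $Z_G(K)x$; the latter lives in a smaller copy $\g(V)\times V$ (resp.\ the block $\gl_k\times\C^{k}\times\C^{k*}$ for $\GL_n$) on which $\{u,Xu,\dots\}$ spans, where closedness follows from the Rallis--Schiffmann criterion (Lemma \ref{lm:closedmaximal}) or an explicit fiber computation. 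Your stabilizer computation is the right first step, but the conclusion you draw from it does not follow.

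The second half, as written, is a strategy rather than a proof: you correctly identify that the Hilbert--Mumford/Kempf analysis must rule out all competing Jordan types and placements of $u$, and you state this as the ``main obstacle'' without resolving it. The paper takes a different and complete route that bypasses this combinatorics entirely: it first computes $\C[\g\times E]^{G}$ explicitly as a polynomial ring on the traces $\tr_i$ and the moments $\mu_j$ (or $\eta_j$). Every closed orbit is then the unique closed orbit in a fiber of the quotient map $\pi$, the fibers meeting $\CN_{\g}\times E$ are those over points with vanishing trace coordinates and moment vector $(y_1,\dots,y_k,0,\dots,0)$ with $y_k\neq 0$, and one exhibits an element of $\fN_G$ in each such fiber whose orbit is closed by the first half. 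Uniqueness of the closed orbit in each fiber then gives the converse. If you want to salvage your approach, you would need either to carry out the Kempf-optimal-destabilizer analysis in full for each Jordan type, or to switch to the invariant-theoretic parametrization; as it stands the converse direction is not established.
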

    
For every $x,x'\in\fN_{G}$, we also give a sufficient and necessary condition for $Gx=Gx'$ (see Propositions \ref{prop:closedorbitglndifferent} and \ref{prop:closeddifferent}). For example, if $G=\GL_n(\C)$, then $Gx=Gx'$ if and only if $k=k'$ and $y_1=z_1,\dots,y_k=z_k$, where $x=x(k,y_1,\dots,y_k)$ and $x'=x(k',z_1,\dots,z_{k'})$ are as in \eqref{eq:closedorbitnullconeGL}.
    
Now we are going to describe the closed $G$-orbits in $\g\times E$. To this end, we construct a subset $\fX_G$ of $\g\times E$ as follows:

    If $G=\GL_n(\C)$, then $\fX_G$ consists of all elements in $\g\times E$ which have the form
    \begin{equation}
			\label{eq:closedorbitnilpotentgln}
			(\diag(c_1I_{n_1}+N_1,\dots,c_bI_{n_b}+N_b),(\left(u^{(1)}\right)^t,\dots,\left(u^{(b)}\right)^t)^t,(v^{(1)},\dots,v^{(b)})),
		\end{equation}
        where $n_1,\dots,n_b\geq 1$ such that $n_1+\dots+n_b=n$, $c_i\neq c_j$ for $1\leq i\neq j\leq b$, and $(N_i,u^{(i)},v^{(i)})\in\fN_{\GL_{n_i}(\C)}$.

    If $G=\Sp_{2n}(\C),\RO_{2n+1}(\C)$ or $\RO_{2n}(\C)$, then $\fX_G$ consists of all pairs $(X,u)\in\g\times E$ satisfying the following conditions:
    
    $\bullet$
    \begin{equation}
        \label{eq:closedsom1}
        X=\left(\begin{matrix}
			\begin{array}{c:c:c:c}
				N_0^{(1)}&&N_0^{(2)}&\\\hdashline
				&\diag(c_1I_{n_1}+N_1,\dots,c_bI_{n_b}+N_b)&&\\\hdashline
				&&N_0^{(3)}&\\\hdashline
				&&&\diag(-c_1I_{n_1}-N_1^t,\dots,-c_bI_{n_b}-N_b^t)
			\end{array}
		\end{matrix}\right),
     \end{equation}
            where $c_i\neq\pm c_j$ for $1\leq i\neq j\leq b$, $N_0^{(1)}\in\C^{n_0^{\prime}\times n_0^{\prime}}$, $N_0^{(2)}\in\C^{n_0^{\prime}\times n_0}$, $N_0^{(3)}\in\C^{n_0\times n_0}$, and $N_i\in\CN_{\gl_{n_i}(\C)}$ for $1\leq i\leq b$, with $n_0\geq 0$ and $n_0^{\prime},n_1,\dots,n_b\geq 1$ such that
		\begin{equation*}
			n_0+n_1+\dots+n_b=n\quad\text{and}\quad n_0^{\prime}=\begin{cases*}
				n_0+1,&if $G=\RO_{2n+1}(\C)$,\\
				n_0,&if $G=\Sp_{2n}(\C)$ or $\RO_{2n}(\C)$;
			\end{cases*}
		\end{equation*}
        
        $\bullet$
        \begin{equation}
        \label{eq:closedsov}
            u=\left(\left(u^{(0)}\right)^t,\left(u^{(1)}\right)^t,\dots,\left(u^{(b)}\right)^t,\left(v^{(0)}\right)^t,\left(v^{(1)}\right)^t,\dots,\left(v^{(b)}\right)^t\right)^t
        \end{equation}
        %with $u^{(0)}\in\C^{n_0^{\prime}\times 1},v^{(0)}\in\C^{n_0\times 1},u^{(1)},v^{(1)}\in\C^{n_1\times 1},\dots,u^{(b)},v^{(b)}\in\C^{n_b\times 1}$
        such that, for $1\leq i\leq b$, $(N_i,u^{(i)},\left(v^{(i)}\right)^t)\in\fN_{\GL_{n_i}(\C)}$, and
        \begin{equation*}
            \left(\left(\begin{matrix}
				N_0^{(1)}&N_0^{(2)}\\&N_0^{(3)}
			\end{matrix}\right),\left(\begin{matrix}
			    u^{(0)}\\v^{(0)}
			\end{matrix}\right)\right)\in\fN_{G_0},
        \end{equation*}
        where
        \begin{equation}
        	\label{eq:subgroup}
            G_0=\begin{cases*}
                \Sp_{2n_0}(\C),&if $G=\Sp_{2n}(\C)$,\\
                \RO_{2n_0+1}(\C),&if $G=\RO_{2n+1}(\C)$,\\
                \RO_{2n_0}(\C),&if $G=\RO_{2n}(\C)$.
            \end{cases*}
        \end{equation}

        Based on Theorem \ref{thm:closedorbitsnullcone}, we prove in Section \ref{sec:5} the following classification result. 
        
	\begin{theorem}
		\label{thm:closedorbit}
		For every $x\in\fX_G$, $Gx$ is a closed $G$-orbit in $\g\times E$. Conversely, each closed $G$-orbit in $\g\times E$ has such a form.
	\end{theorem}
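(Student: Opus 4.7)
The plan is to reduce Theorem \ref{thm:closedorbit} to Theorem \ref{thm:closedorbitsnullcone} by combining the Jordan decomposition of the Lie algebra component with a centralizer--slice argument.

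\textbf{Step 1 (semisimple reduction).} Given $(X,u) \in \g \times E$, I write the Jordan decomposition $X = X_s + X_n$ with $X_s$ semisimple, $X_n$ nilpotent, and $[X_s,X_n]=0$. Since semisimple elements always have closed $G$-orbits in $\g$, after $G$-conjugation I may put $X_s$ into a standard block-diagonal form. For $G = \GL_n(\C)$ this is $\diag(c_1 I_{n_1},\dots,c_b I_{n_b})$ with pairwise distinct eigenvalues $c_i$. For $G = \Sp_{2n}(\C)$ or $\RO_m(\C)$, invariance under the alternating/symmetric form forces the spectrum of $X_s$ to be stable under $\lambda \mapsto -\lambda$; the nonzero eigenvalues occur in $\pm$-pairs with paired eigenspaces $V_{c_i}$ and $V_{-c_i}$, while the $0$-generalized-eigenspace inherits a nondegenerate form. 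This lets me arrange $X_s$ exactly in the block shape of \eqref{eq:closedsom1} with a zero block of the size required by $G_0$ and paired blocks $\diag(c_i I_{n_i},-c_i I_{n_i})$ with $c_i\neq \pm c_j$ for $i\neq j$.

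\textbf{Step 2 (centralizer and slice reduction).} Set $L := Z_G(X_s)$ with Lie algebra $\mathfrak{l} := Z_\g(X_s)$. In the chosen normal form, $L$ factors as $\prod_i \GL_{n_i}(\C)$ if $G = \GL_n(\C)$, and as $G_0 \times \prod_i \GL_{n_i}(\C)$ otherwise, where each $\GL_{n_i}(\C)$ acts diagonally on $V_{c_i}\oplus V_{-c_i}$ after identifying $V_{-c_i}\cong V_{c_i}^*$ through the form. An element $g(X_s+X_n)g^{-1}$ lies in $X_s+\mathfrak{l}$ iff $g\in L$, so intersecting $G\cdot(X,u)$ with the affine slice $(X_s+\mathfrak{l})\times E$ gives exactly $\{X_s\}\times L\cdot(X_n,u)$. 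The key equivalence
\[
G\cdot(X,u) \text{ is closed in } \g\times E \iff L\cdot(X_n,u) \text{ is closed in } \mathfrak{l}\times E
\]
then follows from Luna's \'etale slice theorem, whose hypotheses hold since $L$ is reductive as the centralizer of a semisimple element in the reductive group $G$.

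\textbf{Step 3 (factorwise closedness).} The $X_s$-weight decomposition of $\C^n$ (respectively $\C^{2n}$) splits $E$ into $L$-stable summands compatible with the factorization of $L$, so the $L$-module $\mathfrak{l}\times E$ becomes a product of the standard $\GL_{n_i}(\C)$-representations $\gl_{n_i}(\C)\times\C^{n_i\times 1}\times\C^{1\times n_i}$ and, when present, the standard $G_0$-representation $\g_0\times E_0$. An orbit of a product group on a product representation is closed iff every factor orbit is closed, so Theorem \ref{thm:closedorbitsnullcone} applied factorwise forces $(N_i,u^{(i)},v^{(i)}) \in \fN_{\GL_{n_i}(\C)}$ and (where applicable) $(N_0,u^{(0)},v^{(0)}) \in \fN_{G_0}$. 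This is precisely the data defining $\fX_G$ in \eqref{eq:closedorbitnilpotentgln} and \eqref{eq:closedsom1}--\eqref{eq:closedsov}. The converse direction --- that every $x\in\fX_G$ has closed $G$-orbit --- is obtained by running the same slice equivalence in reverse, starting from a closed $L$-orbit obtained factorwise from Theorem \ref{thm:closedorbitsnullcone}.

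The main obstacle is the slice equivalence in Step 2. One implication is immediate (a closed $G$-orbit meets a closed affine slice in a closed set), but the other requires Luna's theorem or, alternatively, a direct Hilbert--Mumford argument showing that any destabilizing one-parameter subgroup of $G$ for $(X,u)$ can be conjugated to lie in $L$, which would contradict closedness of $L\cdot(X_n,u)$. A secondary subtlety in Step 1 is verifying carefully that the form-induced pairing identifies $V_{c_i}$ with $V_{-c_i}^*$ so that $L$ acquires a single $\GL_{n_i}(\C)$-factor per $\pm c_i$-pair (rather than two independent $\GL$-factors); this identification is what produces the transposed blocks $-c_i I_{n_i}-N_i^t$ in \eqref{eq:closedsom1} and governs how $u^{(i)}$ and $v^{(i)}$ sit inside \eqref{eq:closedsov}.
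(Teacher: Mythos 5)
Your forward direction (every closed orbit has a representative in $\fX_G$) is essentially the paper's argument, but with a technical slip in the slice: the claim that $g(X_{\rs}+X_{\rn})g^{-1}\in X_{\rs}+\mathfrak{l}$ forces $g\in L$ is false. For example, with $X_{\rs}=\diag(1,2)$, $X_{\rn}=0$ and $g$ the coordinate swap, one gets $gX_{\rs}g^{-1}=\diag(2,1)=X_{\rs}+\diag(1,-1)\in X_{\rs}+\mathfrak{l}$ while $g\notin L$; in general the intersection of the orbit with $(X_{\rs}+\mathfrak{l})\times E$ is a finite union of $L$-orbits permuted by $N_G(L)/L$. The paper's Lemma \ref{lm:closedorbitnilpotent} avoids this by intersecting with $(X_{\rs}+\CN_{X_{\rs}})\times E$: requiring the displacement $Y$ to be \emph{nilpotent} and commuting with $X_{\rs}$ pins down the Jordan decomposition and genuinely forces $g\in L$. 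With that correction, your Steps 1 and 3 (forward direction) match Propositions \ref{prop:closedorbitnilpotentgln} and \ref{prop:closedorbitnilpotent}.

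The genuine gap is the reverse implication of your ``key equivalence'' in Step 2, which is exactly what the converse of the theorem requires and which you never prove. Luna's criterion in the form used in this paper (Theorem \ref{thm:Lunacriterion}) needs a reductive subgroup $K$ with $x\in X^K$, and compares $Gx$ with $Z_G(K)x$; the natural candidate $K$ (the torus generated by $X_{\rs}$) does satisfy $Z_G(K)=L$ but does \emph{not} fix $u$, so the criterion does not apply, and invoking ``Luna's \'etale slice theorem'' does not by itself yield that closedness of $L\cdot(X_{\rn},u)$ implies closedness of $G\cdot(X,u)$. The paper proves the converse by a different route: it takes $K=G_x$ itself (which is reductive for $x\in\fX_G$), computes $Z_G(G_x)$ explicitly ($\GL(V)$ times scalars in the general linear case, $G(V)$ with $V=\Span_\C\{u,Xu,\dots\}$ in the other cases), and then verifies by explicit computation — Proposition \ref{prop:closedorbitmaximalgln} and Lemma \ref{lm:closedorbitmaximalso} — that $u$ (and $v$) is cyclic on the relevant subspace, so that the Rallis--Schiffmann criteria (Lemmas \ref{lm:regularsemisimplegln} and \ref{lm:closedmaximal}) show $Z_G(G_x)x$ is closed; only then does Theorem \ref{thm:Lunacriterion} give closedness of $Gx$ (Propositions \ref{prop:closedgln} and \ref{prop:closedorbitso}). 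These cyclicity computations are the substantive content of the converse direction, and neither ``running the slice equivalence in reverse'' nor the sketched-but-unexecuted Hilbert--Mumford alternative supplies them.
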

    
In Section \ref{sec:5}, we also prove     
the following result, which says that the closed orbits in $\g\times E$ of  $\breve{G}$ coincide with that of $G$. 

    \begin{theorem}
		\label{thm:closedorbitsmvw}
        Every $G$-closed orbit in $\g\times E$ is $\breve{G}$-stable.
	\end{theorem}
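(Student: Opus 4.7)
The plan is to exploit the fact that $(-1)\in\breve{G}$ is an involutive automorphism normalizing $G$, so the image $(-1).O$ of any closed $G$-orbit $O\subseteq\g\times E$ is again a closed $G$-orbit; the task is thus to show $(-1).O=O$. By geometric invariant theory (valid because our groups are reductive and $\g\times E$ is affine), two closed $G$-orbits coincide iff they are not separated by any element of $\C[\g\times E]^G$, and the fibres of the GIT quotient $\g\times E\to(\g\times E)/\!/G$ each contain a unique closed orbit. Consequently, the theorem will follow once I verify that every $G$-invariant regular function on $\g\times E$ is automatically $(-1)$-invariant, i.e.\,lies in $\C[\g\times E]^{\breve{G}}$.

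To do so I would pin down an explicit generating set via the first fundamental theorem of invariant theory: for $G=\GL_n(\C)$, the traces $\tr(X^i)$ together with the scalars $vX^ju$ ($i,j\ge0$); for $G=\RO_n(\C)$, the traces $\tr(X^i)$ together with the bilinears $u^t\alpha_nX^iu$; and for $G=\Sp_{2n}(\C)$, the traces $\tr(X^i)$ together with $u^t\beta_{2n}X^iu$. For each generator one checks $(-1)$-invariance by a short parity calculation, using the skew-symmetry identities $X^t=-\alpha_nX\alpha_n^{-1}$ in the orthogonal case and $X^t=-\beta_{2n}X\beta_{2n}^{-1}$ in the symplectic case, together with the identity $I_{n,n}\beta_{2n}I_{n,n}=-\beta_{2n}$. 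The same identities force the ``wrong-sign'' invariants — those for which the $(-1)$-action would introduce an uncancelled minus — to vanish identically on $\g\times E$ already (so in particular $\tr X^i=0$ for $i$ odd in types $B$, $C$, $D$, and $u^t\alpha_nX^iu=0$ for $i$ odd, etc.). Hence no genuine invariant fails to be preserved.

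The main hazard is the completeness of the generating list in the orthogonal and symplectic cases, which rests on a Weyl-type first fundamental theorem for a mixture of the adjoint and the standard representation; here I would want to double-check that no Pfaffian-style invariant has been missed. If completeness were to fail or be awkward to cite cleanly, the backup plan is a direct geometric construction matching the classification. Namely, for each $x\in\fX_G$ the block structure in \eqref{eq:closedorbitnilpotentgln}--\eqref{eq:closedsom1} is governed by the generalized eigenspaces of $X$, with a pairing $c\leftrightarrow-c$ in types $B$, $C$, $D$; one then exhibits an explicit element of $G$ that swaps the $c$-block and the $-c$-block and acts by the antidiagonal permutation $w_k$ inside each Jordan $\GL$-block (so that $w_kJ_kw_k^{-1}=J_k^t$). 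This reduces matters via Theorem \ref{thm:closedorbitsnullcone} to a finite check on the representatives in $\fN_G$, whose rigid form in \eqref{eq:closedorbitnullconeGL}--\eqref{eq:nilpotentclosedoevenv} makes the remaining verification essentially mechanical.
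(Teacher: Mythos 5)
Your proposal is correct and follows essentially the same route as the paper: reduce to showing $\C[\g\times E]^{\breve{G}}=\C[\g\times E]^{G}$ (the paper's Lemma \ref{lm:mvwclosed}) and then verify $(-1)$-invariance of the explicit generators generator by generator via parity computations (the paper's Lemma \ref{lm:mvwinvariant}, using the generating set $\mathfrak A_G$ already established in Theorem \ref{thm:structure}). The completeness of the generating list that you flag as the main hazard is exactly what the paper's Section 3 supplies, so no backup argument is needed.
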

	\subsection{Descents}
	%Applying Theorem~\ref{thm:closedorbit}, we determine all representations that may arise in the descendant of the enhanced standard representation.
    Let $O$ be a closed $\breve{G}$-orbit in $\g\times E$, and let $x\in O$. We denote by $\breve{G}_x$ the stabilizer of $\breve{G}$ at $x$, denote by $N_O^{\g\times E}$ the normal bundle of $O$ in $\g\times E$, and  denote by $N_{O,x}^{\g\times E}$ the fiber of $N_O^{\g\times E}$ at $x$.
    
	\begin{defn}
		\label{defn:descent}
		The natural action $\breve{G}_x$ on $N_{O,x}^{\g\times E}$ is called the \emph{descendant} of the enhanced standard representation at $x$.
	\end{defn}
%	\begin{theorem}
%		\label{thm:mainsimple}
%		Let $x\in\g\times E$ be such that $Gx$ is a closed orbit. The descendant $G_x\curvearrowright N_{Gx,x}^{\g\times E}$ must be as following:
%		
%		(1) Products of $\triv^{2k}$ and
%		\begin{equation*}
%			\GL_{k_1}(\C)\curvearrowright\gl_{k_1}(\C)\times\C^{k_1\times 1}\times\C^{1\times k_1},\dots,\GL_{k_b}(\C)\curvearrowright\gl_{k_b}(\C)\times\C^{k_b\times 1}\times\C^{1\times k_b}
%		\end{equation*}
%		with $k_1+\dots+k_b+k=n$, if $G=\GL_n(\C)$ and $E=\C^{n\times 1}\times\C^{1\times n}$;
%		
%		(2) Products of $\triv^{2k}$ and
%		\begin{align*}
%			&\Sp_{2l}(\C)\curvearrowright\sp_{2l}(\C)\times\C^{2l},\\
%			&\GL_{k_1}(\C)\curvearrowright\gl_{k_1}(\C)\times\C^{k_1\times 1}\times\C^{1\times k_1},\dots,\GL_{k_b}(\C)\curvearrowright\gl_{k_b}(\C)\times\C^{k_b\times 1}\times\C^{1\times k_b}
%		\end{align*}
%		with $l+k_1+\dots+k_b+k=n$, if $G=\Sp_{2n}(\C)$ and $E=\C^{2n}$;
%		
%		(3) Products of $\triv^k$ and
%		\begin{align*}
%			&\RO_{l}(\C)\curvearrowright\o_{l}(\C)\times\C^{l},\\
%			&\GL_{k_1}(\C)\curvearrowright\gl_{k_1}(\C)\times\C^{k_1\times 1}\times\C^{1\times k_1},\dots,\GL_{k_b}(\C)\curvearrowright\gl_{k_b}(\C)\times\C^{k_b\times 1}\times\C^{1\times k_b}
%		\end{align*}
%		with $l+2k_1+\dots+2k_b+k=m$, if $G=\RO_m(\C)$ and $E=\C^{m}$.
%	\end{theorem}

To describe such descendants, we need to define the MVW extension for a product of classical groups as well as its enhanced standard representation. 
Let $H_1,\dots,H_r$ be classical groups as in \eqref{eq:classicalgroups}, and set $H=H_1\times\dots\times H_r$. 
For $i=1,\dots,r$, write $\h_i\times E_i$ for the enhanced standard representation of $\breve{H}_i$ .

	\begin{defn}
		\label{defn:fiberproduct}
		We define the \emph{MVW extensions} $\breve{H}$ of $H$ to be the fiber product
		\begin{equation*}
			\breve{H}_1\times_{\{\pm 1\}}\dots\times_{\{\pm 1\}}\breve{H}_r:=\{(h_1,\dots,h_r,\delta)\mid (h_1,\delta)\in\breve{H}_1,\dots,(h_r,\delta)\in\breve{H}_r\}.
		\end{equation*}
       Additionally, we call the natural representation
        \begin{equation*}
            \h^{\mathrm{en}}=(\h_1\times E_1)\times\dots\times(\h_r\times E_r)
        \end{equation*}
         of $\breve{H}$ the  \emph{enhanced standard representation} of $\breve{H}$.
	\end{defn}
    
	%Let $\chi$ be the sign character from $\breve{H}$ to $\{\pm 1\}$ with kernel $H$. Namely, 
    %\[
    %\chi:\ \breve{H}\rightarrow \{\pm 1\},\quad (h_1,\dots,h_r,\delta)\mapsto \delta.
    %\]
 We denote by $\triv$ the trivial representation of $\breve{H}$. In the following result, we determine the descendants of enhanced standard representations $\g\times E$ of $\check{G}$. 
    
	\begin{theorem}
		\label{thm:mainmvwsimple}
		Let $O$ be a closed $\breve{G}$-orbit in $\g\times E$ and let $x\in O$. 
		
		\noindent(1) If $G=\GL_n(\C)$, then there exist $k\ge 0$ and $k_1,\dots,k_b\geq 1$ such that
		\begin{equation*}
		k+k_1+\dots+k_b=n,\quad	\breve{G}_x\simeq\breve{H},\quad \text{and}\quad 
            N_{O,x}^{\g\times E}\simeq \h^{\mathrm{en}}\oplus\triv^{2k},
		\end{equation*}
        where $H=\GL_{k_1}(\C)\times\dots\times\GL_{k_b}(\C)$.
		
		\noindent(2) If $G=\Sp_{2n}(\C)$, then there exist $l,k\geq 0$ and $k_1,\dots,k_b\geq 1$ such that
		\begin{equation*}
			k+l+k_1+\dots+k_b=n,\quad\breve{G}_x\simeq\breve{H}\quad \text{and}\quad 
            N_{O,x}^{\g\times E}\simeq \h^{\mathrm{en}}\oplus\triv^{2k},
		\end{equation*}
		where $H=\Sp_{2l}(\C)\times\GL_{k_1}(\C)\times\dots\times\GL_{k_b}(\C)$.
		
		\noindent(3) If $G=\RO_{n}(\C)$, then there exist $\gamma\in\{0,1\}$, $k,l\geq 0$ and $k_1,\dots,k_b\geq 1$ such that
		\begin{equation*}
			k+l+2k_1+\dots+2k_b+\gamma=n,\quad \breve{G}_x\simeq\breve{H}\quad \text{and}\quad 
            N_{O,x}^{\g\times E}\simeq \h^{\mathrm{en}}\oplus\triv^{2k+\gamma},
		\end{equation*}
		where $H=\RO_{l}(\C)\times\GL_{k_1}(\C)\times\dots\times\GL_{k_b}(\C)$.
	\end{theorem}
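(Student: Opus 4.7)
The plan is to use Theorem~\ref{thm:closedorbit} to select a standard representative $x \in \fX_G$ of the closed orbit $O$, and then to compute $\breve{G}_x$ and the normal space $N_{O,x}^{\g \times E}$ directly from the explicit block description. Decompose $x$ into a ``nilpotent-type'' block (from $\fN_{G_0}$ in the orthogonal and symplectic cases, or a zero-eigenvalue $\GL_{n_0}$-block when $G=\GL_n(\C)$) together with several $\GL_{n_i}$-type blocks $(c_i I_{n_i}+N_i,u^{(i)},v^{(i)})$ with $(N_i,u^{(i)},v^{(i)}) \in \fN_{\GL_{n_i}(\C)}$ of Jordan size $k_i'$, and set $k_i := n_i-k_i'$. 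The integers appearing in the theorem will emerge by tracking the Jordan sizes (contributing to $k$) against the auxiliary zero-block sizes $k_i$ (yielding the $\GL_{k_i}$-factors of $H$).

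First I would compute $G_x$ block by block. Because the scalars $c_i$ (and $\pm c_i$ in the symplectic and orthogonal cases) are pairwise distinct, the centralizer of the semisimple part of $X$ in $G$ is block diagonal, so $G_x$ factors through the blocks. On a $\GL_{n_i}$-block, commuting with $\diag(J_{k_i'},0)$ together with the pinning conditions $g u^{(i)}=u^{(i)}$ and $v^{(i)}g^{-1}=v^{(i)}$ (invoking $u_{k_i'}^{(i)}\neq 0$ and $v_1^{(i)}\neq 0$ from \eqref{eq:closedorbitnullconeGL}) forces $g$ to act as the identity on the Jordan part and as an arbitrary element of $\GL_{k_i}(\C)$ on the zero part. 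An analogous but more elaborate calculation on the $\fN_{G_0}$-block, relying on the explicit forms \eqref{eq:nilpotentclosedspm}--\eqref{eq:nilpotentclosedoevenv}, produces a classical factor of the same type as $G_0$ but of smaller rank. Aggregating gives $G_x \simeq H$ with the stated $H$.

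Next I would extend $G_x$ to $\breve{G}_x$: by Theorem~\ref{thm:closedorbitsmvw} the orbit $O$ is $\breve{G}$-stable, so some $(h,-1)\in\breve{G}_x$ exists and $\breve{G}_x = G_x \sqcup (h,-1)G_x$. The crucial verification is that the induced conjugation involution on each factor of $H$ agrees with the MVW involution of that factor: a direct matrix computation with $h$ built from the Jordan-reversing permutations (and an element matching the bilinear form in the orthogonal/symplectic case) shows the involution acts as $g \mapsto g^{-t}$ on each $\GL_{k_i}(\C)$-factor and as the prescribed MVW involution on the $\RO_l(\C)$ or $\Sp_{2l}(\C)$-factor. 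This identifies $\breve{G}_x$ with the fiber product $\breve{H}$ of Definition~\ref{defn:fiberproduct}.

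Finally, for the normal space, I would use the $\breve{G}_x$-equivariant splitting $\g\times E \simeq T_x(Gx)\oplus N_{O,x}^{\g\times E}$ (afforded by reductivity, using $T_x(Gx)\simeq\g/\g_x$) and produce an explicit complement adapted to the block decomposition of $x$. The off-block intertwiners (between blocks with distinct eigenvalues) are absorbed into $T_x(Gx)$. Within each block, the surviving pieces recover the enhanced standard representation $\h^{\mathrm{en}}$ of $\breve{H}$ (one copy of $\gl_{k_i}\times E_i$ per $\GL_{k_i}(\C)$-factor together with the classical-factor standard component), plus one copy of $\triv\oplus\chi$ for every Jordan step in a $\GL_{n_i}$-block and in the $\fN_{G_0}$-block, totalling $\triv^k\oplus\chi^k$; the extra $\chi^\gamma$ in the orthogonal case reflects the parity asymmetry of the $\fN_{G_0}$-block in \eqref{eq:nilpotentclosedooddm}/\eqref{eq:nilpotentclosedoevenm}, where the ``central'' row/column in the odd case (or its analogue in the even case) supplies an additional $(h,-1)$-eigenvector of eigenvalue $-1$. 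I expect the main obstacle to be this final bookkeeping — pinning down the precise $(h,-1)$-action on each one-dimensional summand of the complement, and correctly tracking the odd/even orthogonal dichotomy encoded in $\gamma$.
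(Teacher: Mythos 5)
Your proposal follows essentially the same route as the paper: choose a representative in $\fX_G$, compute $G_x$ blockwise, use Theorem \ref{thm:closedorbitsmvw} to produce an order-two element $(g_0,-1)\in\breve{G}_x$ realizing the MVW involution on each factor, identify $N_{O,x}^{\g\times E}\simeq\g_x\times E$, and track the $\triv\oplus\chi$ summands coming from the cyclic span of $u$ inside each block. The paper organizes exactly this bookkeeping by first treating nilpotent representatives $x\in\fN_G$ (Propositions \ref{prop:descentglnmvw} and \ref{prop:descentmvw}) and then reducing the general case to a fiber product over $Z_{\breve{G}}(X_{\rs})$.
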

	
    The proof of Theorem \ref{thm:mainmvwsimple} is given in Section \ref{sec:6}.
    %We remark that the descendants of $\GL_n(\C)\curvearrowright\gl_n(\C)\times\C^{n\times 1}\times\C^{1\times n}$ appear naturally in the proof of the smooth transfer for Jacquet-Rallis trace formula. See \cite{Zha14, Xue19, CZ21} for details. Additionally, as pointed out in \cite{AGb}, by applying \cite{AG}*{Theorem 3.2.1},  Theorem \ref{thm:mainmvwsimple} can provide a direct proof of \cite{AGb}*{Proposition 3.2.1}, \cite{AGRS}*{Propositions 3.2, 5.2} and \cite{SZ}*{Propositions 7.1 and 7.2}.
	\section{Preliminaries}
    
\subsection{General notation}
\begin{itemize}
    \item  In this paper, all the (algebraic) varieties and 
groups are defined over $\C$.
\item We consider finite-dimensional vector spaces over $\C$ as algebraic varieties.
\item For the vector space $\C^{n\times 1}$, denote by $\{\be_1,\dots,\be_n\}$ its standard basis.
\item For an algebraic group $H$ acting on a variety $X$, a point $x\in X$, and a subset $K$ in $H$, we denote by
\begin{itemize}
\item  $X^H$ the set of all points in $X$ fixed by $H$,
\item $(H\bac X,\pi)$ the categorical quotient of $X$ by $H$ (if it exists), 
\item $H_x$ the stabilizer of $x$,
\item $Hx$ the $H$-orbit of $x$ in $X$, and
\item  $Z_H(K)$ the centralizer of $K$ in $H$.
\end{itemize}

\item For a Lie algebra $\h$ acting on a vector space $V$ and a vector $x\in V$, denote by $\h_x$ the stabilizer of $x$ in $\h$, and by $\h x$ the $\h$-orbit of $x$ in $V$.

\item For a variety $X$ and a point $x\in X$, denote by $T_xX$ the tangent space of $X$ at $x$. For a subvariety $Y$ of $X$ containing $x$, denote by $N_{Y,x}^X=(T_xX|_Y)/T_xY$ the normal space of $Y$ in $X$ at $x$.

\item For $y=(y_1,\dots,y_n)\in\C^{n\times 1}$, put
\begin{equation*}
	\phi(y)=\left(\begin{matrix}
		y_1&y_2&\dots&y_n\\
		&y_1&\dots&y_{n-1}\\
		&&\ddots&\vdots\\
		&&&y_1
	\end{matrix}\right)\quad\text{and}\quad\psi(y)=\left(\begin{matrix}
	    y_1&\dots&y_{n-1}&y_n\\
	    y_2&\dots&y_{n}&\\
	    \vdots&\iddots&&\\
	    y_n&&&
	\end{matrix}\right).
\end{equation*}
\end{itemize}
	\subsection{MVW extensions}
	Let $V$ be a vector space over $\C$. Denote by $\GL(V)$ the group of $\C$-linear automorphisms of $V$, and by $\gl(V)$ the Lie algebra of $\GL(V)$. The MVW-extension of $\GL(V)$ is
	\begin{equation*}
		\breve{\GL}(V)=\GL(V)\rtimes\{\pm 1\},
	\end{equation*}
	where $-1$ acts on $\GL(V)$ by transposition. 
      By specifying a basis of $V$, we have $\GL(V)=\GL_n(\C)$ and $\breve{\GL}(V)=\breve{\GL}_n(\C)$, where $n=\dim_{\C}V$.
	
	Assume now that $V$ is equipped with a quadratic or symplectic form $\langle\cdot,\cdot\rangle$. Put
	\begin{equation*}
		G(V)=\{g\in\GL(V)\mid\langle gv,gw\rangle=\langle v,w\rangle\text{ for }v,w\in V\}.
	\end{equation*}
	The Lie algebra of $G(V)$ is
	\begin{equation*}
		\g(V)=\{X\in\gl(V)\mid\langle Xv,w\rangle+\langle v,Xw\rangle=0\text{ for }v,w\in V\}.
	\end{equation*}
	Additionally, the MVW-extension $\breve{G}(V)$ of $G(V)$ is the subgroup of $\GL(V)\times\{\pm 1\}$ consisting of pairs $(g,\delta)$ such that either
	\begin{equation*}
		\delta=1\quad\text{and}\quad\langle gv,gw\rangle=\langle v,w\rangle\quad\text{for }v,w\in V,
	\end{equation*}
	or
	\begin{equation*}
		\delta=-1\quad\text{and}\quad\langle gv,gw\rangle=\langle w,v\rangle\quad\text{for }v,w\in V.
	\end{equation*}
    Let $\breve{G}(V)$ act on $\g(V)\times V$ by
    \begin{equation*}
        (g,\delta).(X,u)=(\delta gXg^{-1},\delta gu)
    \end{equation*}
    for $(g,\delta)\in\breve{G}(V)$, $X\in\g(V)$ and $u\in V$. This is a rational representation of $\breve{G}(V)$ (and $G(V)$ by taking restriction). We call this representation of $\breve{G}(V)$ (resp. $G(V)$) the \emph{enhanced standard representation} of $\breve{G}(V)$ (resp. $G(V)$).
    
   Note that if $G=\Sp_{2n}(\C)$ or $\RO_n(\C)$, then we have $G=G(E)$ and $\g=\g(E)$, where $E$ is equipped with the bilinear form $\langle\cdot,\cdot\rangle_E$ defined by
    \begin{equation*}
        \langle u,v\rangle_E=\begin{cases*}
            u^t\beta_{2n}v,&if $G=\Sp_{2n}(\C)$,\\
            u^t\alpha_nv,&if $G=\RO_n(\C)$.
        \end{cases*}
    \end{equation*}
    In this case, two definitions of the enhanced standard representation of $\breve{G}$ (resp. $G$) coincide.
	\subsection{Classical invariant theory}
	Let $H$ be a reductive group, acting on an affine variety $X$.
%	\begin{defn}
%		\label{defn:categoricalquotient}
%		The \emph{categorical quotient} of $X$ by $H$ is a pair $(Y,\pi)$, where $Y$ is analgebraic variety $Y$ and a morphism $\phi:X\to Y$ that is constant along $H$-orbits, such that for every other such pair $(Z,\phi)$, there exists a unique morphism $\phi^{\prime}:Y\to Z$ such that $\phi=\phi^{\prime}\circ\pi$.
%	\end{defn}
	It is known that the categorical quotient of $X$ by $H$ always exists. More precisely, $H\bac X=\Spec\left(\C[X]^H\right)$ and $\pi:X\to H\bac X$ is induced by the inclusion $\C[X]^H\hookrightarrow\C[X]$  (see \cite{PV}). Note that the morphism $\pi$ is surjective, and sends each $H$-invariant closed subset of $X$ onto a closed subset of $H\bac X$. Additionally,  every fiber of $\pi$ contains a unique closed orbit.
	
	\begin{theorem}[Luna's criterion, \cf \cite{PV}*{Remark of Theorem 6.17}]
		\label{thm:Lunacriterion}
		Let $K$ be a reductive subgroup of $H$, and let $x\in X^K$. Then the orbit $Hx$ is closed if and only if the orbit $Z_H(K)x$ is closed.
	\end{theorem}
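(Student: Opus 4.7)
The plan is to prove both implications via Kempf's theorem on optimal destabilizing one-parameter subgroups, which will let us transfer the closedness question between the $H$- and $Z_H(K)$-actions. As a preliminary observation, $Z_H(K)$ is reductive (being the centralizer of a reductive subgroup of a reductive group), so the categorical quotient $\pi_K\colon X^K\to Z_H(K)\bac X^K$ exists; like $\pi$, each of its fibers contains a unique closed orbit, and closedness in $X^K$ coincides with closedness in $X$ because $X^K$ is closed in $X$.

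For the implication ``$\Leftarrow$'', suppose $Z_H(K)x$ is closed in $X^K$ while $Hx$ is not closed in $X$. Let $O$ be the unique closed $H$-orbit in $\overline{Hx}$. By the Hilbert--Mumford criterion there exists a one-parameter subgroup $\mu\colon\BG_m\to H$ with $\lim_{t\to 0}\mu(t)x\in O$. Kempf's theorem refines this: the optimal class of such $\mu$ is canonical modulo its associated parabolic $P(\mu)$, so any reductive subgroup fixing $x$ lies in a Levi of $P(\mu)$, and we may choose $\mu$ to centralize $K$, that is $\mu(\BG_m)\subseteq Z_H(K)$. Then $\lim_{t\to 0}\mu(t)x\in\overline{Z_H(K)x}=Z_H(K)x\subseteq Hx$, forcing $O\cap Hx\neq\emptyset$ and hence $O=Hx$, a contradiction.

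For the implication ``$\Rightarrow$'', assume $Hx$ is closed. Matsushima's theorem gives that $H_x$ is reductive, and $K\subseteq H_x$ by hypothesis. The affine orbit $Hx\simeq H/H_x$ has $K$-fixed subvariety $(H/H_x)^K$ equal to a finite disjoint union of closed $Z_H(K)$-orbits, parametrized by the $H_x$-conjugacy classes of $H$-conjugate embeddings $K\hookrightarrow H_x$ (finiteness coming from Richardson's theorem on conjugacy classes of reductive subgroups). In particular $Z_H(K)x$ is one such orbit and is closed.

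The main obstacle is justifying the two structural inputs above: the refinement of Hilbert--Mumford inside $Z_H(K)$, and the decomposition of $(H/H_x)^K$ into closed $Z_H(K)$-orbits. Both follow from Kempf--Rousseau theory combined with Luna's slice theorem applied to the étale local model $H\times^{H_x}N_x$ of a neighborhood of the closed orbit. In the framework of this paper it suffices to cite \cite{PV}*{Remark of Theorem 6.17}, which packages precisely this biconditional.
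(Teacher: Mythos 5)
The paper does not prove this statement at all: it is imported verbatim from \cite{PV}*{Remark of Theorem 6.17}, so the only ``approach'' on the paper's side is the citation that you yourself fall back on in your final sentence. Your sketch of the implication ``$\Leftarrow$'' is the standard Kempf--Rousseau argument and is essentially correct: if $Hx$ is not closed, the optimal destabilizing class has a fixed associated parabolic $P$ containing $H_x\supseteq K$, the reductive group $K$ lands in a Levi of $P$, the Levis are the centralizers of the one-parameter subgroups in the optimal class, so some optimal $\mu$ has image in $Z_H(K)$, and its limit lies in $\overline{Z_H(K)x}\setminus Hx$, contradicting closedness of $Z_H(K)x$.

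The implication ``$\Rightarrow$'' as you have written it has a genuine gap. Granting Matsushima (so $H_x$ is reductive) and Richardson-type rigidity (so $(Hx)^K$ is a \emph{finite} union of $Z_H(K)$-orbits), finiteness alone does not yield that each of these orbits is closed: a finite orbit decomposition only forces the minimal-dimensional orbits to be closed, as the action of $\BG_m$ on $\BA^1$ already shows, and a reductive stabilizer does not imply a closed orbit either ($\BG_m$ acting on $\BA^2$ by $t.(a,b)=(ta,t^{-1}b)$ has a non-closed orbit with trivial stabilizer). The closedness of the $Z_H(K)$-orbits on $(H/H_x)^K$ --- equivalently, of the double cosets $Z_H(K)hH_x$ with $h^{-1}Kh\subseteq H_x$ --- is precisely the nontrivial content of Luna's theorem (the finiteness of $Z_H(K)\bac X^K\to H\bac X$ in \cite{PV}*{Theorem 6.17}), and your closing appeal to ``Kempf--Rousseau theory combined with Luna's slice theorem'' does not indicate how that step is actually carried out. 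In short: the citation, which is all the paper uses, is legitimate and sufficient; but as a self-contained proof your ``$\Rightarrow$'' direction silently outsources its key step to the very result being proved.
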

    
	%The following result is useful to calculate $\dim H\bac X$.
%   Write $d=\max_{x\in X}\dim Hx$, and  set
%		\begin{equation*}
%			\Omega(X)=\{x\in X\mid Hx\text{ is closed and has dimension }d\}.
%		\end{equation*}
%	\begin{prop}
%		\label{cor:genericallyclosed}
%		Suppose that $X$ is irreducible and $\Omega(X)$ is nonempty. Then
%		\begin{equation*}
%			\dim H\bac X=\dim X-\dim H+\dim H_x
%		\end{equation*}
%		for all $x\in\Omega(X)$.
%	\end{prop}
%	\begin{proof}
%		Set $Y=\pi(\Omega(X))$. By \cite{FSR}*{Chapter 14, Theorem 3.13}, $\Omega(X)$ is an open subset in $X$, and so $Y$ is an open subset in $H\bac X$. Since $X$ is irreducible, $H\bac X$ is also irreducible, and then
%		\begin{equation*}
%			\dim X=\dim\Omega(X)\quad\text{and}\quad\dim H\bac X=\dim Y.
%		\end{equation*}
%		Note that the fibers of 
%		\begin{equation*}
%			\pi|_{\Omega(X)}:\Omega(X)\to Y
%		\end{equation*}
%		are closed orbits contained in $\Omega(X)$, and hence have the same dimension. Therefore
%		\begin{equation*}
%			\dim H-\dim H_x=\dim Hx=\dim\Omega(X)-\dim Y=\dim X-\dim H\bac X
%		\end{equation*}
%		for $x\in\Omega(X)$. This completes the proof.
%	\end{proof}
	\subsection{The algebra of invariants}
    
	The first step to classify closed orbits is to determine the algebra $\C[\g\times E]^{G}$ of invariants. 

    \begin{theorem}
		\label{thm:structure}
		The algebra $\C[\g\times E]^{G}$ is a polynomial ring with   \begin{equation*}
        \mathfrak A_G=\begin{cases*}
        \{\tr_1,\dots,\tr_n,\mu_0,\dots,\mu_{n-1}\},&if $G=\GL_{n}(\C)$,\\
            \{\tr_2,\tr_4,\dots,\tr_{2n},\eta_1,\eta_3,\dots,\eta_{2n-1}\},&if $G=\Sp_{2n}(\C)$,\\
            \{\tr_2,\tr_4,\dots,\tr_{2n},\eta_0,\eta_2,\dots,\eta_{2n}\},&if $G=\RO_{2n+1}(\C)$,\\
            \{\tr_2,\tr_4,\dots,\tr_{2n},\eta_0,\eta_2,\dots,\eta_{2n-2}\},&if $G=\RO_{2n}(\C)$.
        \end{cases*}
    \end{equation*} as a set of algebraic independent generators.
	\end{theorem}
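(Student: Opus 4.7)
The plan is to establish three claims: (i) each element of $\mathfrak A_G$ is $G$-invariant; (ii) $\mathfrak A_G$ generates $\C[\g\times E]^G$ as a $\C$-algebra; (iii) the Krull dimension of $\C[\g\times E]^G$ equals $|\mathfrak A_G|$. Since $\g\times E$ is irreducible, the invariant algebra is a domain, so (ii) and (iii) together force the generators to be algebraically independent, producing the desired polynomial algebra.

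Task (i) is a direct computation. Conjugation invariance of $\tr_i(X)=\tr(X^i)$ is standard. In the $\GL_n$ case, the mutually contragredient actions on $u$ and $v$ yield $(vg^{-1})(gX^ig^{-1})(gu)=vX^iu$, so $\mu_i$ is invariant. In the $\Sp_{2n}$ and $\RO_n$ cases, invariance of $\eta_i(X,u)=\langle u,X^iu\rangle_E$ follows from $G$-invariance of the form. The parity constraint on $i$ in the list $\mathfrak A_G$ is forced by $\langle Xu,w\rangle_E+\langle u,Xw\rangle_E=0$ for $X\in\g$: iteration gives
\begin{equation*}
\langle u,X^iu\rangle_E=(-1)^i\langle X^iu,u\rangle_E=(-1)^i\epsilon\langle u,X^iu\rangle_E,
\end{equation*}
with $\epsilon=-1$ for $\Sp$ and $\epsilon=+1$ for $\RO$, so $\eta_i\equiv 0$ unless $i$ has the prescribed parity.

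For task (ii), I would invoke the First Fundamental Theorem of classical invariant theory (\cf Weyl's \emph{The Classical Groups} or Procesi). In the $\GL_n$ case, the invariants on $\gl_n(\C)\oplus\C^{n\times 1}\oplus\C^{1\times n}$ are generated by traces of words in $X$ and the rank-one matrix $uv$; by cyclic invariance of trace these reduce to $\tr(X^i)$ and $\tr(X^iuv)=vX^iu=\mu_i$. In the $\Sp/\RO$ cases on $\g(E)\oplus E$, the FFT produces $\tr(X^i)$ and the pairings $\langle X^iu,X^ju\rangle_E$, and the identity in task (i) collapses the latter (up to sign) to $\eta_{i+j}=\langle u,X^{i+j}u\rangle_E$. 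Finally, Cayley--Hamilton applied to the characteristic polynomial of $X$ (which is even in $t$ for $\Sp_{2n}$ and $\RO_{2n}$, odd for $\RO_{2n+1}$) reduces the range of indices to those appearing in $\mathfrak A_G$. I expect this to be the main obstacle: invoking the FFT cleanly so that the bounds on indices match $\mathfrak A_G$ exactly. A natural fallback is to restrict invariants to a Cartan-type section (diagonal $X$ with generic vector part), where the restrictions become classical symmetric polynomials, and transfer the generation statement back via a Chevalley-style restriction argument.

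For task (iii), I would pick a generic closed orbit using Theorem \ref{thm:closedorbit}: take $X$ regular semisimple with eigenvalues in general position (imposing additionally $c_i\neq\pm c_j$ in the $\Sp/\RO$ cases) and the vector part generic, so that a direct inspection shows $G_x=\{1\}$. Proposition \ref{cor:genericallyclosed} then yields
\begin{equation*}
\operatorname{Kdim}\C[\g\times E]^G=\dim G\bac(\g\times E)=\dim(\g\times E)-\dim G,
\end{equation*}
and a direct dimension count gives $|\mathfrak A_G|=2n$ for $\GL_n(\C)$, $\Sp_{2n}(\C)$, and $\RO_{2n}(\C)$, and $|\mathfrak A_G|=2n+1$ for $\RO_{2n+1}(\C)$, matching in every case. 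Combined with (ii), this shows $\mathfrak A_G$ is algebraically independent and completes the proof.
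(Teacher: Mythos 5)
Your overall strategy---(i) invariance, (ii) generation via the first fundamental theorem, (iii) algebraic independence by matching the Krull dimension of $\C[\g\times E]^G$ against $|\mathfrak A_G|$ using a generic closed orbit with trivial stabilizer and Proposition \ref{cor:genericallyclosed}---is exactly the paper's architecture. Tasks (i) and (ii) are carried out in the paper by the same means: the parity computation you give is the one used in Corollary \ref{cor:generatororthogonalsymplectic}, and your FFT invocation corresponds to the paper's Lemmas \ref{lm:tensorinvariant}--\ref{lm:multilinearorthogonalsymplectic} and Proposition \ref{prop:generatororthogonalsymplectic} (for $\GL_n(\C)$ the paper simply cites \cite{NO}). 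Be aware that for the symplectic/orthogonal cases the clean form of the FFT you want is for $G(E)$ acting on $\gl(E)\oplus E$ (generators $\tr_M$ and $\eta_M$ for noncommutative monomials $M$ in $X,X^*$), obtained from the tensor FFT by the polarization argument of Proposition \ref{prop:generatororthogonalsymplectic}; one then restricts to $\g(E)$ where $X^*=-X$. This is routine but is the step your sketch compresses the most.

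The one genuine defect is in task (iii): you cannot invoke Theorem \ref{thm:closedorbit} inside the proof of Theorem \ref{thm:structure}. In this paper the proof of Theorem \ref{thm:closedorbit} runs through Theorem \ref{thm:closedorbitsnullcone} and the explicit quotient morphisms $\pi$, all of which presuppose Theorem \ref{thm:structure}; as written your argument is circular. The fix is immediate and is what the paper does: the only input you need is \emph{one} closed orbit of maximal dimension, and this is supplied by the Rallis--Schiffmann criteria quoted as Lemmas \ref{lm:regularsemisimplegln} and \ref{lm:closedmaximal} (\cite{RS}*{Theorems 6.3 and 17.1}), which are independent of everything in Section 3. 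Your choice of a regular semisimple $X$ with a cyclic vector $u$ (generic $u$ works, since the minimal polynomial of a regular semisimple element has degree $\dim_\C E$) satisfies the hypotheses of those lemmas directly, giving closedness and trivial stabilizer in one stroke; the dimension count then matches $|\mathfrak A_G|$ in all four cases as you state. This choice is in fact a mild simplification of the paper, which instead exhibits nilpotent representatives (needed later for Theorem \ref{thm:closedorbitsnullcone}) and consequently has to do extra work for $\RO_{2n}(\C)$, where no nilpotent pair $(X,u)$ has $\{u,Xu,\dots,X^{2n-1}u\}$ a basis and the generic stabilizer is $\{\pm1\}$ rather than trivial (Section \ref{section:closedmaximal}). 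With the citation repaired, your proof is correct.
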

	\begin{proof}
		When $G$ is a general linear group, this is \cite{NO}*{Theorem 2.1 (2)}. In the case of orthogonal or symplectic groups, the structure of $\C[\g\times E]^G$ is given in \cite{Sch}*{Table 3a.2, 3a.5 and Table 4a.3}.
	\end{proof}
Here, for every $i\ge 1$, 	$\tr_i$ denotes the polynomial on $\g\times E$ given by 
    \begin{equation*}
        \tr_i(X,u)=\tr(X^i).
    \end{equation*}
 For every $j\ge 0$, $\mu_j$ denotes the polynomial of $\gl_n(\C)\times \C^{n\times 1}\times \C^{1\times n}$ given by 
    \begin{equation*}
        \mu_j(X,u,v)=vX^ju.
    \end{equation*}
And, when $G=\Sp_{2n}(\C)$, $\RO_{2n+1}(\C)$ or $\RO_{2n}(\C)$, $\eta_j$ denotes the polynomial on $\g\times E$ defined by 
    \begin{equation*}
        \eta_j(X,u)=\langle X^ju,u\rangle_E.
    \end{equation*}

	\section{Closed orbits of the maximal dimension}
	
	In this section, we give a family of closed $G$-orbits in $\CN_\g\times E$, which has the maximal dimension.

   % The algebra $\C[\g\times E]^{G}$ is explicitly described in \cite{NO}*{Theorem 2.1} when $G$ is a general linear group, and in the case of orthogonal or symplectic groups, its structure is given in \cite{Sch}*{Table 3a.2, 3a.5 and Table 4a.3}.
   
    \subsection{The general linear case}
    %When $G=\GL_n(\C)$, Theorem \ref{thm:structure} is proved in \cite{NO}*{Theorem 2.1 (2)}. To prove the elements of $\mathfrak A_G$ are algebraically independent, Nishiyama and Ohta construct a family of closed $\GL_n(\C)$-orbits, which are  fibers of $\pi$. However, for the purpose of classifying the closed $\GL_n(\C)$-orbits in $\CN_{\gl_n(\C)}\times\C^{n\times 1}\times\C^{1\times n}$, we need to give another family of such closed orbits. Thus, we sketch a proof of Theorem \ref{thm:structure} (for the case that $G=\GL_n(\C)$) here.
    %\begin{prop}
    %    \label{prop:generators}
    %    The algebra $\C[\gl_n(\C)\times\C^{n\times 1}\times\C^{1\times n}]^{\GL_n(\C)}$ is generated by $$\tr_1,\dots,\tr_n,\mu_0,\dots,\mu_{n-1}.$$
    %\end{prop}
    %\begin{proof}
    %    This is proved in \cite{NO}*{Theorem 1.1}.
    %\end{proof}
    Assume that $G=\GL_n(\C)$. By Theorem \ref{thm:structure}, we regard the quotient $$\GL_n(\C)\bac(\gl_n(\C)\times\C^{n\times 1}\times\C^{1\times n})$$ as a closed subset of $\C^{1\times 2n}$, so that the quotient morphism is given by
	\begin{align*}
		\pi:\gl_n(\C)\times\C^{n\times 1}\times\C^{1\times n}&\to\GL_n(\C)\bac(\gl_n(\C)\times\C^{n\times 1}\times\C^{1\times n})\subseteq\C^{1\times 2n}\\
		x&\mapsto(\tr_1(x),\dots,\tr_n(x),\mu_0(x),\dots,\mu_{n-1}(x)).
	\end{align*}

    Recall the subset  $\fN_{\GL_n(\C)}$  of $\CN_{\gl_n(\C)}\times\C^{n\times 1}\times\C^{1\times n}$ defined in the Introduction.
    \begin{prop}
		\label{prop:maximalclosedgenerallinear}
		Let $x=x(n,y_1,\dots,y_n)\in\fN_{\GL_n(\C)}$. Then the stabilizer of $x$ is trivial, and $\GL_n(\C)x=\pi^{-1}(\underbrace{0,\dots,0}_n,y_1,\dots,y_n)$ is a closed orbit.
	\end{prop}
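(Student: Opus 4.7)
My plan is to: (i) evaluate $\pi(x)$ directly to check that $x$ lies in the asserted fiber; (ii) verify that the stabilizer at $x$ is trivial; and (iii) establish the reverse inclusion $\pi^{-1}(\pi(x)) \subseteq \GL_n(\C) x$ by constructing, for each fiber element, an explicit conjugating group element. Closedness of $\GL_n(\C) x$ then follows immediately because it coincides with the fiber $\pi^{-1}(\pi(x))$ of the quotient morphism $\pi$.

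\textbf{Steps (i) and (ii).} Write $x = (J_n, y', v_0)$ with $y' = (y_1, \ldots, y_n)^t$ and $v_0 = (1, 0, \ldots, 0)$. Since $J_n^i$ has zero diagonal for every $i \ge 1$, we get $\tr_i(x) = 0$, while a direct calculation yields $\mu_j(x) = v_0 J_n^j y' = y_{j+1}$ for $0 \le j \le n-1$, so $\pi(x) = (0,\ldots,0,y_1,\ldots,y_n)$. For the stabilizer, any $g \in \GL_n(\C)$ fixing $x$ must commute with $J_n$, hence lies in the centralizer $\{\sum_{i=0}^{n-1} a_i J_n^i : a_0 \neq 0\}$ of upper-triangular Toeplitz matrices. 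The row condition $v_0 g^{-1} = v_0$ is the statement that the first row of $g$ equals $v_0$, which pins down $a_0 = 1$ and $a_i = 0$ for $i \ge 1$, forcing $g = I$. (Interestingly, no use of the column condition $g y' = y'$ is even needed.)

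\textbf{Step (iii).} Fix $(X, u, v) \in \pi^{-1}(\pi(x))$. The vanishing of $\tr_1(X), \ldots, \tr_n(X)$ forces $X$ to be nilpotent (Newton's identities, so $X^n = 0$ by Cayley--Hamilton), and $v X^{n-1} u = y_n \neq 0$ in particular implies $X^{n-1} u \neq 0$. A standard cyclic-vector argument then shows that $\{X^{n-i} u\}_{i=1}^n$ is a basis of $\C^n$ on which $X$ acts as $J_n$; letting $P$ be the change-of-basis matrix, we have $P^{-1} X P = J_n$, $P^{-1} u = \be_n$, and, using the prescribed values of the $\mu_j$, $vP = (y_n, y_{n-1}, \ldots, y_1)$. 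It remains to produce $g$ in the centralizer of $J_n$ sending $(J_n, \be_n, vP)$ to $x$. Taking $g = \sum_{i=0}^{n-1} y_{n-i} J_n^i$ works: the last column of $g$ equals $y'$, giving $g \be_n = y'$, and its first row equals $vP$, giving $(vP) g^{-1} = v_0$; the condition $y_n \neq 0$ ensures $g$ is invertible.

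\textbf{Main obstacle.} The delicate point is the last step: a \emph{single} element of the centralizer must simultaneously correct both the column vector and the row vector. This succeeds because, for any upper-triangular Toeplitz $g$, the last column and the first row are determined by the same coefficient sequence (read in reverse), and the entries of $vP$ are precisely the reversal of those of $y'$ --- a reflection of the symmetry between $u$ and $v$ in the invariants $\mu_j = v X^j u$. Seeing that these two symmetries match up and conspire to be solved by the same $g$ is the one non-routine observation in the argument.
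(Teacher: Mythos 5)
Your proof is correct and follows essentially the same route as the paper's: both reduce an arbitrary fiber element to the form $(J_n,\ast,\ast)$ using nilpotency plus $vX^{n-1}u=y_n\neq 0$, and then normalize the row and column vectors simultaneously with a single upper-triangular Toeplitz matrix from the centralizer of $J_n$. The only cosmetic difference is that the paper builds the Toeplitz matrix directly from the entries of $v$ and reads off $gu=y'$ from the invariant equations, whereas you first pass to the cyclic basis generated by $u$ and then apply the Toeplitz matrix with coefficients $y_n,\dots,y_1$.
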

	\begin{proof}
		It is clear that $\pi(x)=(0,\dots,0,y_1,\dots,y_n)$ and that the stabilizer of $x$ is trivial. It remains to show that the fiber
		\begin{align*}
			O&=\pi^{-1}(0,\dots,0,y_1,\dots,y_n)\\
			&=\{(X,u,v)\in\gl_n(\C)\times\C^{n\times 1}\times\C^{1\times n}\mid X\text{ is nilpotent and }vu=y_1,\dots,vX^{n-1}u=y_n\}
		\end{align*}
		is an orbit.
		
		Let $(X,u,v)\in O$. Since $X$ is nilpotent and $X^{n-1}\neq 0$, we may assume that $X=J_n$. Write
		\begin{equation*}
			u=(u_1,\dots,u_n)^t\quad\text{and}\quad v=(v_1,\dots,v_n).
		\end{equation*}
		Then the condition that $vu=y_1,\dots,vJ_n^{n-1}u=y_n$ is equivalent to the equation
		\begin{equation*}
			\phi(v)u=(y_1,\dots,y_n)^t
		\end{equation*}
		In particular, $y_n=v_1u_n\neq 0$ forces that $v_1\neq 0$ and $u_n\neq 0$. Then $g=\phi(v)\in\GL_n(\C)$. It is easy to verify that
		\begin{equation*}
			gJ_ng^{-1}=J_n,\quad gu=(y_1,\dots,y_n)^t\quad\text{and}\quad(v_1,\dots,v_n)g^{-1}=(1,0,\dots,0).
		\end{equation*}
		Therefore the fiber $O$ is an orbit, as required.
	\end{proof}
	\subsection{The symplectic and orthogonal cases}
    \label{section:closedmaximal}
	 Assume that $G=\Sp_{2n}(\C),\RO_{2n+1}(\C)$ or $\RO_{2n}(\C)$. Set $m=\dim_{\C}E$. Recall the map $r:\fN_G\to\Z_{\geq 0}$ defined in Subsection \ref{sec:closed}. Let $\fM_G$ be the set consisting of $x\in\fN_G$ such that $r(x)=n$.

	\begin{lm}
		\label{lm:stabilizermaximal}
		Let $x=(X,u)\in\fM_G$. If $G=\Sp_{2n}(\C)$ or $\RO_{2n+1}(\C)$, then $G_x$ is trivial. If $G=\RO_{2n}(\C)$, then $G_x=\{\pm 1\}$.
	\end{lm}
	\begin{proof}
		Let $V=\Span_{\C}\{X^iu\mid i\in\Z_{\geq 0}\}\subseteq E$. Then $\langle\cdot,\cdot\rangle_E|_V$ is non-degenerate and $G_x=G(V^{\bot})$. If $G=\Sp_{2n}(\C)$ or $\RO_{2n+1}(\C)$, then $V=E$ and hence $G_x$ is trivial. When $G=\RO_{2n}(\C)$, $V^{\bot}=\Span_{\C}\{\be_n-\be_{2n}\}$ is one-dimensional, and hence $G_x=\RO(V^{\bot})=\{\pm 1\}$.
	\end{proof}
	
	Set $d=m-n$. By Theorem \ref{thm:structure}, the quotient morphism $$\pi:\g\times E\to G\bac(\g\times E)=\C^{1\times m}$$ is given by
	\begin{equation*}
		\pi(x)=\begin{cases*}
			(\tr_2(x),\dots,\tr_{2n}(x),\eta_1(x),\dots,\eta_{2n-1}(x)),&if $m=2n$ and $G=\Sp_{2n}(\C)$,\\
			(\tr_2(x),\dots,\tr_{2n}(x),\eta_0(x),\dots,\eta_{2n}(x)),&if $m=2n+1$ and $G=\RO_{2n+1}(\C)$,\\
			(\tr_2(x),\dots,\tr_{2n}(x),\eta_0(x),\dots,\eta_{2n-2}(x)),&if $m=2n$ and $G=\RO_{2n}(\C)$.
		\end{cases*}
	\end{equation*}
	
	\begin{prop}
		\label{prop:closedmaximal}
		Let $y=(0,\dots,0,y_1,\dots,y_d)\in G\bac(\g\times E)$ with $y_d\neq 0$. Then $\pi^{-1}(y)$ is a closed $G$-orbit with a representative $x\in\fM_G$
	\end{prop}
	\begin{proof}
		We prove the proposition for $G=\Sp_{2n}(\C)$. The other cases are proved similarly. Let $(Y,v)\in\pi^{-1}(y)$ and write $v=(v_1,\dots,v_{2n})$. Then $Y$ is nilpotent. Since
		\begin{equation*}
			\langle Y^{2n-1}v,v\rangle_E=y_n\neq 0,
		\end{equation*}
		we have $Y^{2n-1}\neq 0$. By \cite{CM}*{Proposition 5.2.3 and recipe 5.2.2}, $Y$ lies in the nilpotent orbit corresponding to the partition $[2n]$, and then it is $\Sp_{2n}(\C)$-conjugate with some element $X$ which is as in \eqref{eq:nilpotentclosedspm} with $k=n$. Let $g\in\Sp_{2n}(\C)$ be such that $gYg^{-1}=X$ and let $u=gv$. Then
		\begin{equation*}
			(-1)^{n-1}u_{n+1}^2=\langle X^{2n-1}u,u\rangle_E=y_n\neq 0,
		\end{equation*}
		so $u_{n+1}\neq 0$. Now $$\dim\Sp_{2n}(\C).(Y,v)=\dim\Sp_{2n}(\C)x=\dim\Sp_{2n}(\C)=n(2n+1)$$ for every $(Y,v)\in\pi^{-1}(y)$, i.e., each orbit in $\pi^{-1}(y)$ has dimension $n(2n+1)$. So $\pi^{-1}(y)$ contains exactly one orbit, which implies the proposition.
	\end{proof}

	\begin{cor}
		\label{cor:closedmaximal}
		If $x\in\fM_G$, then $Gx$ is a closed $G$-orbit.
	\end{cor}
    
	\section{Proof of Theorem \ref{thm:closedorbitsnullcone}}\label{sec:4}
    In this section, we give a proof of Theorem \ref{thm:closedorbitsnullcone}.
    
	\subsection{Proof of Theorem \ref{thm:closedorbitsnullcone}: the general linear case}
	In this subsection, we assume that $G=\GL_n(\C)$. Recall the quotient morphism $$\pi:\gl_n(\C)\times\C^{n\times 1}\times\C^{1\times n}\to\GL_n(\C)\bac(\gl_n(\C)\times\C^{n\times 1}\times\C^{1\times n})=\C^{1\times 2n}.$$
	\begin{prop}
		\label{prop:closedorbitgln}
		Let $x=x(k,y_1,\dots,y_k)\in\fN_{\GL_n(\C)}$. Then $\GL_n(\C)x$ is the unique closed $\GL_n(\C)$-orbit in the fiber$$\pi^{-1}(\underbrace{0,\dots,0}_n,y_1,\dots,y_k,\underbrace{0,\dots,0}_{n-k}).$$
Furthermore, the stabilizer $\GL_n(\C)_x\simeq\GL_{n-k}(\C)$.
	\end{prop}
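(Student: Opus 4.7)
The plan is to verify the coordinates of $\pi(x)$ by direct computation, obtain closedness via Luna's criterion by reducing to the full-rank case in Proposition \ref{prop:maximalclosedgenerallinear}, and finally read off the stabilizer from a block-matrix analysis of the fixing equations. Uniqueness inside the fiber is then automatic from the general fact that each fiber of a categorical quotient contains exactly one closed orbit.

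First I would compute $\pi(x)$. Since $X$ is nilpotent with block form $\diag(J_k, 0_{(n-k)\times(n-k)})$, every $\tr_i(x)$ vanishes. Iterating on $u$ gives $X^j u = (y_{j+1},\dots,y_k,0,\dots,0)^t$ for $0 \leq j \leq k-1$ and $X^j u = 0$ for $j \geq k$; pairing with $v = (1, 0, \dots, 0)$ yields $\mu_j(x) = y_{j+1}$ for $j < k$ and $\mu_j(x) = 0$ for $j \geq k$, matching the claimed image.

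For closedness, I would apply Luna's criterion (Theorem \ref{thm:Lunacriterion}) to the block-diagonally embedded reductive subgroup $K = \{I_k\} \times \GL_{n-k}(\C) \subset \GL_n(\C)$. A direct check shows $x$ is $K$-fixed, and a short calculation gives $Z_{\GL_n(\C)}(K) = \GL_k(\C) \times \C^\times \cdot I_{n-k}$, with the central factor acting trivially on $x$. Hence $Z_{\GL_n(\C)}(K)\cdot x = \GL_k(\C)\cdot x$, viewed inside the closed $\GL_k(\C)$-invariant subvariety $\gl_k(\C) \times \C^{k \times 1} \times \C^{1 \times k} \hookrightarrow \gl_n(\C) \times \C^{n \times 1} \times \C^{1 \times n}$ given by the top-left embedding. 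In this smaller representation $x$ is precisely $x(k, y_1, \dots, y_k) \in \fN_{\GL_k(\C)}$, so Proposition \ref{prop:maximalclosedgenerallinear} gives a closed orbit there, hence also in the ambient space; Luna's criterion then yields that $\GL_n(\C)\cdot x$ is closed.

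For the stabilizer, write $g \in \GL_n(\C)$ in block form with top-left block $A$ of size $k \times k$ and remaining blocks $B$, $C$, $D$ of compatible sizes. The relation $gX = Xg$ yields $AJ_k = J_kA$, $J_kB = 0$, and $CJ_k = 0$; so $A$ is an upper-triangular Toeplitz matrix (a polynomial in $J_k$), only the first row of $B$ may be nonzero, and only the last column of $C$ may be nonzero. The condition $g\cdot v = v$, which unpacks to $vg = v$, forces the first row of $g$ to equal $(1, 0, \dots, 0)$; this pins down $A = I_k$ and kills the surviving row of $B$. Finally, $gu = u$ combined with $y_k \neq 0$ forces the surviving last column of $C$ to vanish. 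Thus $g$ is block-diagonal with top block $I_k$ and bottom block $D \in \GL_{n-k}(\C)$ arbitrary, giving $\GL_n(\C)_x \simeq \GL_{n-k}(\C)$. The only delicate step is the choice of $K$ in Luna's criterion, which must be large enough to reduce matters to the already-settled full-rank case; everything else is routine block-matrix computation.
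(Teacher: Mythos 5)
Your proof is correct and follows essentially the same route as the paper: Luna's criterion applied to the reductive subgroup $K=\{I_k\}\times\GL_{n-k}(\C)$, with the centralizer orbit $Z_{\GL_n(\C)}(K)x=\GL_k(\C)x$ reduced to the full-rank case of Proposition \ref{prop:maximalclosedgenerallinear} inside the embedded $\gl_k(\C)\times\C^{k\times 1}\times\C^{1\times k}$; your explicit block-matrix verification of the stabilizer is a detail the paper asserts without proof, and it checks out. (The paper treats $k=0$ and $k=n$ separately; for $k=0$ your claim that $x$ is $K$-fixed only holds if the representative is read as $(0,0,0)$, since $(0,0,(1,0,\dots,0))$ is not fixed by $K=\GL_n(\C)$ and indeed does not have closed orbit.)
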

	\begin{proof}
		Let
		\begin{equation*}
			y=(\underbrace{0,\dots,0}_n,y_1,\dots,y_k,\underbrace{0,\dots,0}_{n-k}),\quad\text{with }y_k\neq 0.
		\end{equation*}
		Then $x\in\pi^{-1}(y)$. If $k=n$, the proposition follows from Proposition \ref{prop:maximalclosedgenerallinear}. If $k=0$, it is clear that $\{(0,0,0)\}$ is a closed orbit in $\pi^{-1}(0)$.
        
        Now assume that $1\leq k\leq n-1$. Let $H=\GL_n(\C)_x$. Then
		\begin{equation*}
			H=\left\{\diag(I_k,g)\mid g\in \GL_{n-k}(\C)\right\}\simeq\GL_{n-k}(\C).
		\end{equation*}
		We have $x\in(\gl_n(\C)\times\C^{n\times 1}\times\C^{1\times n})^H$ and
		\begin{equation*}
			Z_{\GL_n(\C)}(H)=\left\{\diag(g,cI_{n-k})\mid g\in\GL_k(\C)\text{ and }c\in\C^{\times}\right\}.
		\end{equation*}
		By Proposition \ref{prop:maximalclosedgenerallinear}, $Z_{\GL_n(\C)}(H)x$ is closed in the closed subset
		\begin{equation*}
			\left\{\left(\left(\begin{matrix}
				Y&\\&0_{(n-k)\times(n-k)}
			\end{matrix}\right),\left(\begin{matrix}
				u^{\prime}\\0_{(n-k)\times 1}
			\end{matrix}\right),(v^{\prime},0_{1\times(n-k)})\right)\mid Y\in\gl_k(\C),u^{\prime}\in\C^{k\times 1}\text{ and }v^{\prime}\in\C^{1\times k}\right\},
		\end{equation*}
		and hence $Z_{\GL_n(\C)}(H)x$ is closed. By Theorem \ref{thm:Lunacriterion}, $\GL_n(\C)x$ is a closed orbit.
	\end{proof}
	
    %Theorem \ref{thm:closedorbitsnullcone} for $G=\GL_n(\C)$ follows immediately from Proposition \ref{prop:closedorbitgln}.
    
    \begin{prop}
        \label{prop:closedorbitglndifferent}
        Let $x,x'\in\fN_{\GL_n(\C)}$. Then $\GL_n(\C)x=\GL_n(\C)x'$ if and only if $k=k'$ and $y_1=z_1,\dots,y_k=z_k$, where $x=x(k,y_1,\dots,y_k)$ and $x'=x(k',z_1,\dots,z_{k'})$.
    \end{prop}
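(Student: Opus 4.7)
The plan is to reduce the question to Proposition \ref{prop:closedorbitgln}: since each closed $\GL_n(\C)$-orbit in the null cone is the unique closed orbit in the fiber of $\pi$ over its image, two such closed orbits coincide if and only if they have the same image under $\pi$. Hence the proof amounts to computing $\pi(x)$ explicitly for $x = x(k,y_1,\dots,y_k) \in \fN_{\GL_n(\C)}$ and reading off the data $(k, y_1,\dots,y_k)$ from the resulting tuple.

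For the explicit computation, write $X = \left(\begin{smallmatrix} J_k & \\ & 0 \end{smallmatrix}\right)$, $u = (y_1,\dots,y_k,0,\dots,0)^t$, and $v = (1,0,\dots,0)$. Since $X$ is nilpotent, $\tr_i(x) = \tr(X^i) = 0$ for $1 \leq i \leq n$. For $\mu_j$, observe that $J_k^j$ shifts standard basis vectors by $j$ positions, so a direct matrix calculation gives
\begin{equation*}
    X^j u = (y_{j+1}, y_{j+2}, \dots, y_k, 0, \dots, 0)^t \quad \text{for } 0 \leq j \leq k-1,
\end{equation*}
and $X^j u = 0$ for $j \geq k$. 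Pairing with $v$ yields $\mu_j(x) = y_{j+1}$ for $0 \leq j \leq k-1$ and $\mu_j(x) = 0$ for $k \leq j \leq n-1$. Therefore
\begin{equation*}
    \pi(x) = (\underbrace{0,\dots,0}_{n},\, y_1,\dots,y_k,\, \underbrace{0,\dots,0}_{n-k}).
\end{equation*}

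With this in hand, the converse direction is immediate: if $k = k'$ and $y_i = z_i$ for all $i$, then $x = x'$, so $\GL_n(\C)x = \GL_n(\C)x'$. For the forward direction, suppose $\GL_n(\C)x = \GL_n(\C)x'$; then $\pi(x) = \pi(x')$. Using the formula above for both $x$ and $x'$, we match coordinates: the requirement that the last coordinate of $\pi$ that is nonzero appears in position $n+k$ (since $y_k \neq 0$ and $z_{k'} \neq 0$ by the definition of $\fN_{\GL_n(\C)}$) forces $k = k'$, and then direct comparison of the remaining entries yields $y_i = z_i$ for $1 \leq i \leq k$.

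There is no real obstacle here; the statement follows essentially at once from Proposition \ref{prop:closedorbitgln} together with the elementary computation of $\pi(x)$. The only point to be careful about is the role of the nonvanishing condition $y_k \neq 0$ (and similarly $z_{k'} \neq 0$) built into the definition of $\fN_{\GL_n(\C)}$, which is precisely what makes the integer $k$ recoverable from $\pi(x)$.
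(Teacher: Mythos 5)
Your proposal is correct and follows essentially the same route as the paper: the paper's one-line proof invokes Theorem \ref{thm:structure} (so that $\pi$ separates closed orbits), together with the computation $\pi(x(k,y_1,\dots,y_k))=(0,\dots,0,y_1,\dots,y_k,0,\dots,0)$ already recorded in Proposition \ref{prop:closedorbitgln}. You have merely written out explicitly the calculation of $X^ju$ and the role of $y_k\neq 0$ in recovering $k$, which the paper leaves implicit.
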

    \begin{proof}
        This follows immediately from Theorem \ref{thm:structure}.
    \end{proof}
    
	\subsection{Proof of Theorem \ref{thm:closedorbitsnullcone}: the symplectic and orthogonal cases}
	In this subsection, we assume that $G=\Sp_{2n}(\C),\RO_{2n+1}(\C)$ or $\RO_{2n}(\C)$. Recall the quotient morphism
	\begin{equation*}
		\pi:\g\times E\to G\bac (\g\times E)=\C^{1\times m},
	\end{equation*}
	where $m=\dim_{\C}E$.
	
	In the case that $G=\RO_{2n}(\C)$, we need an easy lemma.
	
	\begin{lm}
		\label{lm:evenorthogonalrestriction}
		Assume that $G=\RO_{2n}(\C)$. Let $x=(X,u)\in\fM_G$ and let $V=\Span_{\C}\{X^iu\mid i\in\Z_{\geq 0}\}$. We have $X\in\g(V)$.
	\end{lm}
	\begin{proof}
		Since $X|_V\in\End_{\C}(V)$, we have $X|_{V^{\bot}}\in\End_{\C}(V^{\bot})$. It suffices to show that $X|_{V^{\bot}}=0$, which follows from $\langle Xv,v\rangle_E=0$ for $v\in V^{\bot}$ since $\dim_{\C}V^{\bot}=1$.
	\end{proof}
	
	\begin{prop}
		\label{prop:closedorbit}
		Let $x=(X,u)\in\fN_{G}$ with $r(x)=k$. Then $Gx$ is a closed orbit, and the stabilizer
		\begin{equation}
			\label{eq:stabilizer}
			G_x\simeq\begin{cases*}
				\Sp_{2(n-k)}(\C),&if $G=\Sp_{2n}(\C)$,\\
				\RO_{2(n-k)}(\C),&if $G=\RO_{2n+1}(\C)$,\\
				\RO_{2(n-k)+1}(\C),& if $G=\RO_{2n}(\C)$,
			\end{cases*}
		\end{equation}
		if $k\neq 0$.
	\end{prop}
	\begin{proof}
		If $k=0$, then $x=(0,0)$ and $Gx$ is obviously a closed orbit. If $k=n$, the proposition follows from Corollary \ref{cor:closedmaximal}.
		
		Assume that $1\leq k\leq n-1$. Let
		\begin{equation}
			\label{eq:subspace}
			W_k=\begin{cases*}
				\Span_{\C}\{\be_1,\dots,\be_k,\be_{n+1},\dots,\be_{n+k}\},&if $G=\Sp_{2n}(\C)$ or $\RO_{2n}(\C)$,\\
				\Span_{\C}\{\be_1,\be_2,\dots,\be_{k+1},\be_{n+1},\dots,\be_{n+k}\},&if $G=\RO_{2n+1}(\C)$,
			\end{cases*}
		\end{equation}
		and let $V_k=\Span_{\C}\{u,Xu,\dots,X^{m-1}u\}$. Then $\langle\cdot,\cdot\rangle_E$ is non-degenerated on $V_k$, and $E=V_k\oplus V_k^{\bot}$. Thus $G_x=G(V_k^{\bot})$. Note that $V_k=W_k$ if $G=\Sp_{2n}(\C)$ or $\RO_{2n+1}(\C)$, and $V_k$ is a subspace of $W_k$ of codimension 1 if $G=\RO_{2n}(\C)$. This implies \eqref{eq:stabilizer}.
		
		Write $H=G_x$. Then $x\in(\g\times E)^H$ and $Z_{G}(H)=G(V_k)$. By Lemma \ref{lm:evenorthogonalrestriction}, $x\in\g(V_k)\times V_k$. We have seen that $Z_{G}(H)x$ is a closed orbit in the closed subset $\g(V_k)\times V_k$ of $\g\times E$, so $Gx$ is a closed orbit by Theorem \ref{thm:Lunacriterion}.
	\end{proof}
	\begin{prop}
		\label{prop:closedorbit2}
		Every closed $G$-orbit in $\CN_{\g}\times E$ has a representative $x\in\fN_{G}$.
	\end{prop}
	\begin{proof}
		Let
		\begin{equation*}
			y=(\underbrace{0,\dots,0}_n,\underbrace{y_1,\dots,y_d}_d,0,\dots,0)\in\C^{1\times m}
		\end{equation*}
		with $d\geq 1$ and $y_d\neq 0$. Let
		\begin{equation*}
			k=\begin{cases*}
				d,&if $G=\Sp_{2n}(\C)$ or $\RO_{2n}(\C)$,\\
				d-1,&if $G=\RO_{2n+1}(\C)$.
			\end{cases*}
		\end{equation*}
		We need to show that $\pi^{-1}(y)$ contains an element $x=(X,u)\in\fN_G$ with $r(x)=k$. If $k=0$, then $y=0$ and $\{(0,0)\}$ is the closed orbit in $\pi^{-1}(0)$. If $k=n$, this follows from Proposition \ref{prop:closedmaximal}.
		
		Assume that $1\leq k\leq n-1$. Let $W_k$ be defined by \eqref{eq:subspace}. Then we have a natural embedding $\g(W_k)\hookrightarrow\g$. Denote by
		\begin{equation*}
			\pi_k:\g(W_k)\times W_k\to G(W_k)\bac(\g(W_k)\times W_k)=\C^{1\times (k+d)}
		\end{equation*}
		the quotient morphism. By Proposition \ref{prop:closedmaximal}, $\pi_k^{-1}(0,\dots,0,y_1,\dots,y_d)$ is a closed $G(W_k)$-orbit containing a representative $x'=(X^{\prime},u^{\prime})\in\fN_{G(W_k)}$ with such that $r(x')=k$. Write
		\begin{equation*}
			u^{\prime}=(u_1,\dots,u_d,u_{n+1},\dots,u_{n+k})^t.
		\end{equation*}
		Let $x=(X,u)\in\fN_G$ such that $r(x)=k$ and
		\begin{equation*}
			u=(u_1,\dots,u_d,\underbrace{0,\dots,0}_{n-k},u_{n+1},\dots,u_{n+k},\underbrace{0,\dots,0}_{n-k})^t.
		\end{equation*}
		Then $x=(X,u)\in\pi^{-1}(y)$.
	\end{proof}
    If $x\in\fN_{G}$ with $r(x)=k$, set
    \begin{equation}
    	\label{eq:invariant}
        \eta(x)=\begin{cases*}
        	(\eta_1(x),\eta_3(x),\dots,\eta_{2k-1}(x)),&if $G=\Sp_{2n}(\C)$,\\
        	(\eta_0(x),\eta_2(x),\dots,\eta_{2k}(x)),&if $G=\RO_{2n+1}(\C)$,\\
        	(\eta_0(x),\eta_2(x),\dots,\eta_{2k-2}(x)),&if $G=\RO_{2n}(\C)$.
        \end{cases*}
    \end{equation}
    \begin{prop}
        \label{prop:closeddifferent}
        Let $x,x'\in\fN_{G}$. Then $Gx=Gx'$ if and only if $r(x)=r(x')$ and $\eta(x)=\eta(x')$.
    \end{prop}
    \begin{proof}
        This follows from Theorem \ref{thm:structure}.
    \end{proof}
    
    We now finish the proof of Theorem \ref{thm:closedorbitsnullcone}.
    \begin{proof}[Proof of Theorem \ref{thm:closedorbitsnullcone}]
        The theorem follows from Propositions \ref{prop:closedorbitgln}, \ref{prop:closedorbit} and \ref{prop:closedorbit2}.
    \end{proof}
    
	\section{Proof of Theorems \ref{thm:closedorbit} and \ref{thm:closedorbitsmvw}}\label{sec:5}
This section is devoted to a proof of Theorems  \ref{thm:closedorbit} and \ref{thm:closedorbitsmvw}.	For every $X\in \g$, write  
	\begin{equation*}
		X=X_{\rs}+X_{\rn}
	\end{equation*}
    for the Jordan decomposition of $X$, where
	$X_{\rs}$ is semisimple and $X_{\rn}$ is nilpotent. 
Write $L_{X_{\rs}}=Z_{G}(X_{\rs})$ for the centralizer of $X_{\rs}$ in $G$, and write
	\begin{equation*}
		\CN_{X_{\rs}}=\{Y\in\CN_{\g}\mid [X_{\rs},Y]=0\}.
	\end{equation*}
    Note that $\CN_{X_{\rs}}$ is $L_{X_{\rs}}$-stable.
	\begin{lm}
		\label{lm:closedorbitnilpotent}
		Let $O$ be a closed orbit in $\g\times E$ and let $(X,u)\in O$. Set $O_{\rn}=L_{X_{\rs}}.(X_{\rn},u)$. Then $O_{\rn}$ is a closed $L_{X_{\rs}}$-orbit in $\CN_{X_{\rs}}\times E$, and
		\begin{equation}
			\label{eq:closedorbitnilpotent}
			O\cap \left((X_{\rs}+\CN_{X_{\rs}})\times E\right)=(X_{\rs},0)+O_{\rn}.
		\end{equation}
	\end{lm}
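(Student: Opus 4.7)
The plan is to reduce both claims to the observation that the $G$-action on $\g$ preserves Jordan decomposition, together with a soft closedness argument using the fact that $(X_\rs + \CN_{X_\rs}) \times E$ is a closed subvariety of $\g \times E$.

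First I would verify the key identity $O \cap ((X_\rs + \CN_{X_\rs}) \times E) = (X_\rs, 0) + O_\rn$ by a straightforward check in both directions. Any element of $\g$ lying in $X_\rs + \CN_{X_\rs}$ has $X_\rs$ as its semisimple part by uniqueness of Jordan decomposition (since $Y \in \CN_{X_\rs}$ means $[X_\rs,Y]=0$, so $X_\rs + Y$ is exactly the Jordan decomposition). Now for $g \in G$, the action on $\g$ in each case ($\GL_n$, $\RO_n$, $\Sp_{2n}$) is by a Lie algebra automorphism, hence $(g.X)_\rs = g.X_\rs$ and $(g.X)_\rn = g.X_\rn$. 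Therefore if $(g.X, g.u) \in (X_\rs+\CN_{X_\rs})\times E$, we must have $g.X_\rs = X_\rs$, i.e., $g \in L_{X_\rs}$; conversely, every $g \in L_{X_\rs}$ satisfies $g.X = X_\rs + g.X_\rn$ with $g.X_\rn \in \CN_{X_\rs}$. This yields the displayed equality \eqref{eq:closedorbitnilpotent} and, in particular, shows $O_\rn$ coincides with the image of $O \cap ((X_\rs + \CN_{X_\rs}) \times E)$ under the variety isomorphism
\[
\phi \colon \CN_{X_\rs} \times E \xrightarrow{\ \sim\ } (X_\rs + \CN_{X_\rs}) \times E, \qquad (Y,v) \mapsto (X_\rs + Y, v).
\]

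Second, for closedness of $O_\rn$ in $\CN_{X_\rs} \times E$, I would note that $(X_\rs + \CN_{X_\rs}) \times E$ is a closed subvariety of $\g \times E$ (because $X_\rs + \CN_{X_\rs}$ is closed in $\g$, the nilpotent cone of $\g$ being closed and the Levi $L_{X_\rs}$ being reductive). Hence $O \cap ((X_\rs + \CN_{X_\rs}) \times E)$ is closed in $(X_\rs + \CN_{X_\rs}) \times E$ because $O$ is closed in $\g \times E$. Transporting this closed set through the isomorphism $\phi$ above shows $O_\rn$ is closed in $\CN_{X_\rs} \times E$, and by definition it is a single $L_{X_\rs}$-orbit.

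There is no serious obstacle here; the entire argument is an application of the preservation of Jordan decomposition by the $G$-action plus the closedness of the affine slice $X_\rs + \CN_{X_\rs}$ inside $\g$. The only point requiring a little care is verifying that conjugation (and the other pieces of the $G$-action, in particular the symplectic/orthogonal normalisations) actually acts by a Lie algebra automorphism, so that Jordan decomposition really is respected. Once that is in place, both statements of the lemma follow immediately.
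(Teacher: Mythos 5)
Your proposal is correct and follows essentially the same route as the paper: both directions of the equality \eqref{eq:closedorbitnilpotent} rest on the uniqueness of the Jordan decomposition (forcing $g\in L_{X_{\rs}}$ whenever $g.(X,u)$ lands in $(X_{\rs}+\CN_{X_{\rs}})\times E$), and the closedness of $O_{\rn}$ is then deduced from the equality by intersecting the closed set $O$ with the closed affine slice $(X_{\rs}+\CN_{X_{\rs}})\times E$. You merely spell out the closedness step that the paper leaves implicit; there is no gap.
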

	\begin{proof}
	Note that	the closeness of $O_{\rn}$ follows from the equality \eqref{eq:closedorbitnilpotent}. Thus, we only need to prove  this equality. 
		
		It is obvious that $(X_{\rs},0)+O_{\rn}\subseteq O\cap \left((X_{\rs}+\CN_{X_{\rs}})\times E\right)$. On the other hand, let $Y\in\CN_{\g}$ be such that $(X_{\rs}+Y,v)\in O$. Then   $(X_{\rs}+Y,v)=g.(X_{\rs}+X_{\rn},u)$ for some $g\in G$. Note that
		\begin{equation*}
			gX_{\rs}g^{-1}+gX_{\rn}g^{-1}
		\end{equation*}
		is the Jordan decomposition of $X_{\rs}+Y$, which forces that $g\in L_{X_{\rs}}$ and $g.(X_{\rn},u)=(Y,v)$. Thus, $(X_{\rs}+Y,v)\in(X_{\rs},0)+O_{\rn}$ and so $O\cap \left((X_{\rs}+\CN_{X_{\rs}})\times E\right)\subseteq(X_{\rs},0)+O_{\rn}$, as required. 
	\end{proof}
	
	\subsection{Proof of Theorem \ref{thm:closedorbit}: the general linear case}
    In this subsection, we assume that $G=\GL_n(\C)$. First, by Lemma \ref{lm:closedorbitnilpotent}, we have the following result.
	\begin{prop}
		\label{prop:closedorbitnilpotentgln}
		Each closed $\GL_n(\C)$-orbit of $\gl_n(\C)\times\C^{n\times 1}\times\C^{1\times n}$ has a representative in $\fX_{\GL_n(\C)}$.
	\end{prop}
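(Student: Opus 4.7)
The plan is to reduce from $\g \times E$ to the nilpotent case $\CN_\g \times E$ via the Jordan decomposition and then invoke Theorem \ref{thm:closedorbitsnullcone} factor by factor. Pick any $(X,u,v)\in O$ and write its Jordan decomposition $X=X_{\rs}+X_{\rn}$. After conjugating by a suitable element of $\GL_n(\C)$ (which keeps us inside $O$), I may assume $X_{\rs}=\diag(c_1I_{n_1},\dots,c_bI_{n_b})$, where $c_1,\dots,c_b$ are the distinct eigenvalues of $X_{\rs}$ and $n_1,\dots,n_b$ their respective multiplicities.

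Under this normalization, the centralizer $L_{X_{\rs}}=Z_{\GL_n(\C)}(X_{\rs})$ is identified with the block-diagonal subgroup $\prod_{i=1}^{b}\GL_{n_i}(\C)$, and $\CN_{X_{\rs}}$ consists exactly of the block-diagonal nilpotent matrices $\diag(N_1,\dots,N_b)$ with $N_i\in\CN_{\gl_{n_i}(\C)}$. I decompose $X_{\rn}=\diag(N_1,\dots,N_b)$, $u=((u^{(1)})^t,\dots,(u^{(b)})^t)^t$, and $v=(v^{(1)},\dots,v^{(b)})$ accordingly. Lemma \ref{lm:closedorbitnilpotent} then yields that $L_{X_{\rs}}.(X_{\rn},u,v)$ is a closed $L_{X_{\rs}}$-orbit in $\CN_{X_{\rs}}\times E$. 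Under the identifications above, the $L_{X_{\rs}}$-action on $\CN_{X_{\rs}}\times E$ is the product of the standard $\GL_{n_i}(\C)$-actions on $\CN_{\gl_{n_i}(\C)}\times\C^{n_i\times 1}\times\C^{1\times n_i}$, so this orbit factors as $\prod_i\GL_{n_i}(\C).(N_i,u^{(i)},v^{(i)})$. Since Zariski closure commutes with finite products, each factor $\GL_{n_i}(\C).(N_i,u^{(i)},v^{(i)})$ is itself closed in $\CN_{\gl_{n_i}(\C)}\times\C^{n_i\times 1}\times\C^{1\times n_i}$.

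Finally, I apply Theorem \ref{thm:closedorbitsnullcone} to each factor to obtain $g_i\in\GL_{n_i}(\C)$ such that $g_i.(N_i,u^{(i)},v^{(i)})\in\fN_{\GL_{n_i}(\C)}$. Setting $g=\diag(g_1,\dots,g_b)\in L_{X_{\rs}}$, the element $g$ commutes with $X_{\rs}$ and therefore transports $(X,u,v)$ into the block-diagonal form prescribed by \eqref{eq:closedorbitnilpotentgln}, giving the desired representative in $\fX_{\GL_n(\C)}$. I do not foresee any real obstacle: the only point requiring a moment of care is the product-of-orbits observation used to split the closed $L_{X_{\rs}}$-orbit into closed $\GL_{n_i}(\C)$-orbits, and this is immediate once one notes that the closure of a product orbit in a product variety equals the product of the closures.
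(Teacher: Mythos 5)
Your argument is correct and is essentially the paper's own proof: the paper likewise normalizes $X_{\rs}$ to diagonal form and invokes Lemma \ref{lm:closedorbitnilpotent}, merely leaving implicit the step you spell out, namely that the closed $L_{X_{\rs}}$-orbit factors as a product of $\GL_{n_i}(\C)$-orbits each of which must then be closed, so that Theorem \ref{thm:closedorbitsnullcone} applies blockwise.
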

	%\begin{proof}
	%	Let $O$ be a closed $\GL_n(\C)$-orbit in $\gl_n(\C)\times\C^{n\times 1}\times\C^{1\times n}$, and let $(X,u,v)\in O$. We may assume that $X_{\rs}$ is the diagonal matrix without loss of generality. Then the proposition follows from Lemma \ref{lm:closedorbitnilpotent}.
	%\end{proof}
    
    In what follows, we are going to show that   $\GL_n(\C)x$ is  closed  for every $x\in\fX_{\GL_n(\C)}$.
    
	\begin{lm}
		\label{lm:regularsemisimplegln}
		Let $x=(X,u,v)\in\gl_n(\C)\times\C^{n\times 1}\times\C^{1\times n}$. If $\{u,Xu,\dots,X^{n-1}u\}$ is a basis of $\C^{n\times 1}$ and $\{v,vX,\dots,vX^{n-1}\}$ is a basis of $\C^{1\times n}$, then $\GL_n(\C)x$ is a closed orbit and the centralizer of $x$ is trivial.
	\end{lm}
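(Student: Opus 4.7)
The plan is to handle the triviality of the centralizer first (by a cyclic-vector argument) and then deduce closedness from the Hilbert--Mumford criterion; the two cyclicity hypotheses on $u$ and $v$ should pinch any one-parameter subgroup with a well-defined limit into the trivial one.

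For the centralizer, if $g \in \GL_n(\C)$ fixes $(X,u,v)$, then $gX = Xg$ and $gu = u$, so $gX^iu = X^iu$ for every $i \geq 0$. Since $\{u, Xu, \ldots, X^{n-1}u\}$ is a basis of $\C^{n\times 1}$, this forces $g = I$.

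For closedness, I would invoke the Hilbert--Mumford criterion: $\GL_n(\C) x$ is closed if and only if, for every one-parameter subgroup $\lambda \colon \C^{\times} \to \GL_n(\C)$, whenever $\lim_{t \to 0} \lambda(t)\cdot x$ exists it lies in $\GL_n(\C) x$. Fix such a $\lambda$ and decompose $V := \C^{n \times 1}$ into its $\lambda$-weight spaces $V = \bigoplus_{w \in \Z} V_w$; set $V_{\geq k} := \bigoplus_{w \geq k} V_w$. Writing $\lambda(t) \cdot (X,u,v) = (\lambda(t) X \lambda(t)^{-1},\, \lambda(t) u,\, v \lambda(t)^{-1})$ and matching weights, one checks that the limit as $t \to 0$ exists exactly when $X$ preserves the filtration $\{V_{\geq k}\}$, $u$ lies in $V_{\geq 0}$, and $v$ annihilates $V_{>0} := V_{\geq 1}$.

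Granting that the limit exists, the first two conditions give $X^i u \in V_{\geq 0}$ for every $i \geq 0$, and the cyclic hypothesis on $u$ then forces $V = \Span\{X^iu : i \geq 0\} \subseteq V_{\geq 0}$, so every $\lambda$-weight is non-negative. Dually, since $X$ preserves $V_{\geq 1}$, each $v X^i$ annihilates $V_{>0}$; the cyclic hypothesis on $v$ says the row vectors $v X^i$ span $V^*$, so all of $V^*$ annihilates $V_{>0}$, which forces $V_{>0} = 0$ and hence every weight non-positive. Combining, all weights vanish, so $\lambda$ is trivial and the limit is $x$ itself, which certainly lies in $\GL_n(\C) x$. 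The main bookkeeping point --- and the only place one could easily slip --- is the sign convention for the dual action, namely that $v$ must annihilate the strictly positive weight spaces rather than all non-negative ones; this asymmetry is precisely what allows the two cyclicity conditions to squeeze $\lambda$ to be trivial from opposite sides.
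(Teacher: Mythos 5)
Your argument is correct, but it takes a genuinely different route from the paper: the paper disposes of this lemma by citing Rallis--Schiffmann (Theorem 6.3 of \cite{RS}), whose proof runs through invariant theory --- one shows that the fiber of the categorical quotient map through $x$ is a single orbit (using the invariants $\tr_i$ and $\mu_j$ to pin down the conjugacy class of $X$ and then the pair $(u,v)$ up to the centralizer of $X$), and a full fiber of $\pi$ is automatically closed; this is the same strategy the paper uses explicitly in Proposition \ref{prop:maximalclosedgenerallinear} for the nilpotent case. You instead invoke the Hilbert--Mumford--Kempf--Birkes criterion and carry out the weight-space analysis for an arbitrary one-parameter subgroup $\lambda$. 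Your bookkeeping is right: the limit exists precisely when $X$ preserves the filtration $V_{\geq k}$, $u\in V_{\geq 0}$, and $v$ kills $V_{>0}$; cyclicity of $u$ then forces all weights to be nonnegative, cyclicity of $v$ forces $V_{>0}=0$, and hence $\lambda$ is trivial, so the criterion is satisfied vacuously. The centralizer computation via $gX^iu=X^iu$ is the standard one and is fine. What each approach buys: the quotient-fiber argument yields the stronger statement that $\GL_n(\C)x$ is exactly the fiber $\pi^{-1}(\pi(x))$ (which the paper exploits elsewhere to enumerate closed orbits), while your argument is more self-contained --- modulo quoting the nontrivial Kempf--Birkes theorem that for a reductive group acting on an affine variety, non-closedness of an orbit is always witnessed by a one-parameter subgroup.
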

	\begin{proof}
		This is proved in \cite{RS}*{Theorem 6.3}.
	\end{proof}
    
	\begin{prop}
		\label{prop:closedorbitmaximalgln}
		Let $x=(X,u,v)$ such that
		\begin{equation*}
			X=\diag(c_1I_{n_1}+J_{n_1},\dots,c_bI_{n_b}+J_{n_b})\in\gl_n(\C)
		\end{equation*}
		with $c_i\neq c_j$ for $1\leq i\neq j\leq b$,
		\begin{equation*}
			u=\left(\left(u^{(1)}\right)^t,\dots,\left(u^{(b)}\right)^t\right)^t\in\C^{n\times 1},\quad\text{and}\quad v=(v^{(1)},\dots,v^{(b)})\in\C^{1\times n}
		\end{equation*}
		with $u^{(i)}=(u_1^{(i)},\dots,u_{n_i}^{(i)})^t\in\C^{n_i\times 1}$ ($u_{n_i}^{(i)}\neq 0$) and $v^{(i)}=(1,0,\dots,0)\in\C^{1\times n_i}$ for $1\leq i\leq b$. Then $\GL_n(\C)x$ is a closed orbit.
	\end{prop}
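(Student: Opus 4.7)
The plan is to reduce to Lemma \ref{lm:regularsemisimplegln} by verifying its two hypotheses for the given $x$: namely that $\{u, Xu, \ldots, X^{n-1}u\}$ is a basis of $\C^{n\times 1}$ and that $\{v, vX, \ldots, vX^{n-1}\}$ is a basis of $\C^{1\times n}$. Once both bases are produced, that lemma immediately yields the closedness of $\GL_n(\C)x$.

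For the first basis, the key input is that the scalars $c_1, \ldots, c_b$ are pairwise distinct, so Lagrange interpolation produces polynomials $p_j \in \C[t]$ such that $p_j(X)$ is the projection onto the $j$-th block summand. Consequently $X^k p_j(X) u$ is the vector having $(c_j I_{n_j} + J_{n_j})^k u^{(j)}$ in the $j$-th slot and zero elsewhere. The assumption $u_{n_j}^{(j)} \neq 0$ says precisely that $u^{(j)}$ is a cyclic vector for $c_j I_{n_j} + J_{n_j}$, so $\{(c_j I_{n_j} + J_{n_j})^k u^{(j)}\}_{k=0}^{n_j - 1}$ already spans $\C^{n_j \times 1}$. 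Patching across blocks shows that the cyclic $\C[X]$-submodule $\C[X]\cdot u$ fills $\C^{n\times 1}$. Since the minimal polynomial of $X$ on $\C^{n\times 1}$ equals $\prod_{j=1}^b (t-c_j)^{n_j}$, of degree exactly $n$, the $n$ vectors $u, Xu, \ldots, X^{n-1}u$ must be linearly independent, hence a basis.

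The second basis claim is proved by the mirror argument: on each block, $v^{(j)} = (1, 0, \ldots, 0)$ is a cyclic row vector for $c_j I_{n_j} + J_{n_j}$ under right multiplication, since $v^{(j)}(c_j I_{n_j} + J_{n_j})^k$ traverses a basis of $\C^{1\times n_j}$. Multiplying by the projectors $p_j(X)$ on the right reassembles these into a spanning set of $\C^{1\times n}$, and the same minimal polynomial degree count concludes that $\{v, vX, \ldots, vX^{n-1}\}$ is a basis.

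There is no serious obstacle: the entire proof is a direct verification. What makes the definition of $\fX_{\GL_n(\C)}$ work is that ``$u_{n_j}^{(j)} \neq 0$'' and ``$v^{(j)} = (1, 0, \ldots, 0)$'' are exactly the cyclicity conditions needed on each block, and the distinct-eigenvalue hypothesis on the $c_i$ is what permits gluing these cyclic structures across blocks via Chinese remainder/Lagrange interpolation. Note that, in contrast to earlier propositions in this section, Luna's criterion offers nothing here: the stabilizer of $x$ turns out to be trivial, so Theorem \ref{thm:Lunacriterion} simply restates the problem.
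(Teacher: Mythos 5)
Your proposal is correct and follows essentially the same route as the paper: both reduce the statement to Lemma \ref{lm:regularsemisimplegln} by verifying that $u$ and $v$ are cyclic for $X$ (acting on columns and rows respectively), using the block cyclicity coming from $u_{n_j}^{(j)}\neq 0$ and $v^{(j)}=(1,0,\dots,0)$ together with the distinctness of the $c_i$. The only difference is organizational --- the paper exhibits an explicit triangular family of vectors $(X-c_jI_n)^k\prod_{l>j}(X-c_lI_n)^{n_l}u$, while you invoke Lagrange-interpolation projectors and a dimension count --- and your closing remark that Luna's criterion is vacuous here is accurate.
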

	\begin{proof}
		Let $V=\Span_{\C}\{u,Xu,\dots,X^{n-1}u\}$ and $W=\Span_{\C}\{v,vX,\dots,vX^{n-1}\}$. We claim that $V=\C^{n\times 1}$ and $W=\C^{1\times n}$. The proposition follows from the claim by Lemma \ref{lm:regularsemisimplegln}.
		
		Now we prove the claim. Let
		\begin{align*}
			S_i&=\left\{(X-c_iI_n)^k\prod_{l=i+1}^b(X-c_lI_n)^{n_l}u\mid k=0,\dots n_i-1\right\},\\
			T_i&=\left\{\prod_{l=1}^{i-1}(X-c_lI_n)^{n_l}(X-c_iI_n)^ku\mid k=0,\dots,n_i-1\right\}
		\end{align*}
		for $1\leq i\leq b$. Put $S=\bigcup_{i=1}^bS_i$ and $T=\bigcup_{i=1}^bT_i$. Then $S\subseteq V$ and $T\subseteq W$. It is easy to verify that $S$ is a basis of $\C^{n\times 1}$ and $W$ is a basis of $\C^{1\times n}$. Therefore $V=\C^{n\times 1}$ and $W=\C^{1\times n}$.
	\end{proof}
	\begin{prop}
		\label{prop:closedgln}
		For every $x\in\fX_{\GL_n(\C)}$, the orbit $\GL_n(\C)x$ is closed.
	\end{prop}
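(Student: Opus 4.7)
The plan is to apply Luna's criterion (Theorem \ref{thm:Lunacriterion}) with $K$ taken to be the stabilizer $H := \GL_n(\C)_x$ itself. Writing the Jordan decomposition $X = X_{\rs} + X_{\rn}$, I first observe that $X_{\rs} = \diag(c_1 I_{n_1}, \ldots, c_b I_{n_b})$ has pairwise distinct eigenvalues, so any element of $\GL_n(\C)$ centralizing $X$ is automatically block diagonal. Applying Proposition \ref{prop:closedorbitgln} to each block $(N_i, u^{(i)}, v^{(i)}) \in \fN_{\GL_{n_i}(\C)}$, I would identify
\[
H \cong \prod_{i=1}^b \GL_{n_i - k_i}(\C),
\]
embedded block diagonally with each factor sitting as $\diag(I_{k_i}, g_i)$ inside $\GL_{n_i}(\C)$. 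In particular $H$ is reductive and fixes $x$, so Luna's criterion reduces the task to showing that $Z_G(H) x$ is closed in $\gl_n(\C) \times \C^{n\times 1} \times \C^{1\times n}$.

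A direct computation, using that the centralizer of $\GL_m(\C)$ in itself is $\C^{\times} I_m$, yields
\[
Z_G(H) = \prod_{i=1}^b \bigl(\GL_{k_i}(\C) \times \C^{\times} I_{n_i - k_i}\bigr).
\]
Next I would observe that $Z_G(H) x$ is contained in the closed subvariety $W \subseteq \gl_n(\C) \times \C^{n\times 1} \times \C^{1\times n}$ consisting of those triples whose $i$th block-components have the shape $\bigl(\diag(Y_i,\, c_i I_{n_i - k_i}),\, (w_i, 0)^t,\, (z_i, 0)\bigr)$ for some $(Y_i, w_i, z_i) \in \gl_{k_i}(\C) \times \C^{k_i \times 1} \times \C^{1 \times k_i}$. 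Translation by $(X_{\rs}, 0, 0)$ identifies $W$ with $\prod_i \bigl(\gl_{k_i}(\C) \times \C^{k_i \times 1} \times \C^{1 \times k_i}\bigr)$, and under this identification the $Z_G(H)$-action descends to the product of the natural $\GL_{k_i}(\C)$-actions; the $\C^{\times}$ factors act trivially on the image because they only rescale entries that vanish. Consequently $Z_G(H) x$ corresponds to $\prod_i \GL_{k_i}(\C) x^{(i)}$, where each $x^{(i)} = x(k_i, y_1^{(i)}, \ldots, y_{k_i}^{(i)})$ is a full-rank element of $\fN_{\GL_{k_i}(\C)}$.

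By Proposition \ref{prop:maximalclosedgenerallinear}, each orbit $\GL_{k_i}(\C) x^{(i)}$ is closed---indeed, it is a single fiber of the corresponding quotient map---so the product is closed, and hence $Z_G(H) x$ is closed in $W$, and therefore in $\gl_n(\C) \times \C^{n\times 1} \times \C^{1\times n}$. Luna's criterion then gives that $\GL_n(\C) x$ is closed. I expect the main technical step to be the explicit bookkeeping verifying that $Z_G(H)$ preserves $W$ and that its action descends to the claimed product on $W$; beyond that, the argument is essentially a reduction of the problem to the previously established full-rank case in Proposition \ref{prop:maximalclosedgenerallinear}.
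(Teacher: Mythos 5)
Your overall strategy---Luna's criterion with $K=H=\GL_n(\C)_x$---is exactly the paper's, but your computation of the centralizer is wrong, and this creates a genuine gap for $b\geq 2$. The stabilizer $H$ acts as the \emph{identity} on the entire subspace $V=\bigoplus_{i=1}^b\Span_\C\{\be_1^{(i)},\dots,\be_{k_i}^{(i)}\}$ of dimension $k_1+\dots+k_b$, so its centralizer in $\GL_n(\C)$ contains all of $\GL(V)\simeq\GL_{k_1+\dots+k_b}(\C)$, which mixes the distinguished coordinates across different eigenvalue blocks; the correct answer is $Z_{\GL_n(\C)}(H)=\GL(V)\times\prod_i\C^\times I_{V_i}$, not the block-diagonal product $\prod_i(\GL_{k_i}(\C)\times\C^\times I_{n_i-k_i})$ you wrote down (you computed the centralizer inside $\prod_i\GL_{n_i}(\C)$ rather than inside $\GL_n(\C)$). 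Luna's criterion requires closedness of the orbit of the \emph{actual} centralizer, and showing that the orbit of a proper subgroup is closed says nothing about the (strictly larger) orbit of the full centralizer. So your reduction to a product of fibers via Proposition \ref{prop:maximalclosedgenerallinear} does not finish the proof when $b\geq 2$ (for $b=1$ your computation agrees with the correct one and the argument is fine).

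The missing ingredient is precisely the paper's Proposition \ref{prop:closedorbitmaximalgln}: the $\GL(V)$-orbit one must analyze passes through the element $\bigl(X|_V,u|_V,v|_V\bigr)$ whose matrix part is $\diag(c_1I_{k_1}+J_{k_1},\dots,c_bI_{k_b}+J_{k_b})$ with \emph{distinct} eigenvalues and full Jordan blocks, and whose vectors have nonzero last (resp.\ first) entries in each block. Closedness of this orbit is not a blockwise statement; the paper establishes it by verifying that $u|_V$ is a cyclic vector for $X|_V$ and $v|_V$ is a cyclic covector, and then invoking the Rallis--Schiffmann criterion (Lemma \ref{lm:regularsemisimplegln}). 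Your proposal never touches this mixed semisimple-plus-nilpotent case, so you would need to either correct the centralizer and prove the analogue of Proposition \ref{prop:closedorbitmaximalgln}, or find a different reductive subgroup $K$ fixing $x$ whose genuine centralizer is the block product you used (taking $K$ larger, e.g.\ adjoining the scalars $\C^\times I_{k_i}$, fails because those scalars do not fix $u^{(i)}$).
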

	\begin{proof}
        Write $x=(X,u,v)$ as in \eqref{eq:closedorbitnilpotentgln}. Then for $1\leq i\leq b$, we have
        \begin{equation*}
			N_i=\diag(
				J_{k_i},0_{(n_i-k_i)\times(n_i-k_i)})\quad\text{for some }0\leq k_i\leq n_i,
        \end{equation*}
        $u^{(i)}=(u_1^{(i)},\dots,u_{k_i}^{(i)},0,\dots,0)^t\in\C^{n_i\times 1}$ ($u_{k_i}^{(i)}\neq 0$), and $v^{(i)}=(1,0,\dots,0)\in\C^{1\times n_i}$
    
		View $\C^{n\times 1}$ as the direct sum of $\C^{n_1\times 1},\dots,\C^{n_b\times 1}$. Denote by $\{\be_1^{(i)},\dots,\be_{n_i}^{(i)}\}$ the standard basis of $\C^{n_i\times 1}$. Then $\{\be_1^{(1)},\dots,\be_{n_1}^{(1)},\dots,\be_1^{(b)},\dots,\be_{n_b}^{(b)}\}$ is a standard basis of $\C^{n\times 1}$. Put
		\begin{equation*}
			H=\GL_n(\C)_x=\left\{\diag(I_{k_1},g_1,\dots,I_{k_b},g_b)\mid g_1\in\GL_{n_1-k_1}(\C),\dots,g_b\in\GL_{n_b-k_b}(\C)\right\}.
		\end{equation*}
		Then $x\in (\gl_n(\C)\times\C^{n\times 1}\times\C^{1\times n})^H$. Let $V=\Span_{\C}\{\be_1^{(1)},\dots,\be_{k_1}^{(1)},\dots,\be_1^{(b)},\dots,\be_{k_b}^{(b)}\}$, and $V_i=\Span_{\C}\{\be_{k_i+1}^{(i)},\dots,\be_{n_i}^{(i)}\}$ for $1\leq i\leq b$. Then
		\begin{equation*}
			Z_{\GL_n(\C)}(H)=\GL(V)\times\C^{\times}\cdot I_{V_1}\times\dots\times\C^{\times}\cdot I_{V_b}.
		\end{equation*}
		By Proposition \ref{prop:closedorbitmaximalgln}, $Z_{\GL_n(\C)}(H)x$ is a closed orbit in $\gl_n(\C)\times\C^{n\times 1}\times\C^{1\times n}$. Therefore $\GL_n(\C)x$ is a closed orbit by Theorem \ref{thm:Lunacriterion}.
	\end{proof}
	\subsection{Proof of Theorem \ref{thm:closedorbit}: the symplectic and orthogonal cases}
	Assume that $G=\Sp_{2n}(\C)$, $\RO_{2n+1}(\C)$ or $\RO_{2n}(\C)$. Set $m=\dim_{\C}E$. The following result is obvious.
	
	\begin{lm}
		\label{lm:0eigenspace}
		Let $X\in\g$. If the generalized 0-eigenspace $E_0$ of $X$ is nonzero, then the restriction of the bilinear form $\langle\cdot,\cdot\rangle_E$ on $E_0$ is non-degenerate. 
        %Furthermore, if $E$ is symplectic (resp. orthogonal), then $(E_0,\langle\cdot,\cdot\rangle|_{E_0})$ is symplectic (resp. o orthogonal).
	\end{lm}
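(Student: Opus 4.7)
The plan is to use the generalized eigenspace decomposition of $X$ together with the skew-adjointness property $\langle Xv,w\rangle_E+\langle v,Xw\rangle_E=0$ to show that the generalized $\lambda$-eigenspace and the generalized $\mu$-eigenspace of $X$ are orthogonal whenever $\lambda+\mu\neq 0$. Once this is established, the non-degeneracy of $\langle\cdot,\cdot\rangle_E|_{E_0\times E_0}$ follows immediately from the non-degeneracy of $\langle\cdot,\cdot\rangle_E$ on $E$.

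First, I would write $E=\bigoplus_{\lambda\in\C}E_\lambda$, where $E_\lambda=\ker(X-\lambda I)^m$ is the generalized $\lambda$-eigenspace of $X$. Next, for $v\in E_\lambda$ and $w\in E_\mu$, the identity $\langle Xv,w\rangle_E=-\langle v,Xw\rangle_E$ iterated yields
\begin{equation*}
    \langle (X-\lambda I)^m v,w\rangle_E=(-1)^m\langle v,(X+\lambda I)^m w\rangle_E.
\end{equation*}
The left-hand side vanishes, so $E_\lambda$ is orthogonal to $(X+\lambda I)^m E_\mu$. When $\lambda+\mu\neq 0$, the operator $X+\lambda I$ has only the nonzero eigenvalue $\lambda+\mu$ on $E_\mu$, hence it is invertible on $E_\mu$, so $(X+\lambda I)^m E_\mu=E_\mu$. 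Therefore $\langle E_\lambda,E_\mu\rangle_E=0$ whenever $\lambda+\mu\neq 0$.

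Applying this with $\lambda=0$, we find that $E_0$ is orthogonal to $E_\mu$ for every $\mu\neq 0$. Now suppose $v\in E_0$ satisfies $\langle v,w\rangle_E=0$ for all $w\in E_0$. Combined with the orthogonality above, this forces $\langle v,w\rangle_E=0$ for all $w\in E=\bigoplus_\mu E_\mu$, and the non-degeneracy of $\langle\cdot,\cdot\rangle_E$ on $E$ gives $v=0$. This proves that $\langle\cdot,\cdot\rangle_E|_{E_0\times E_0}$ is non-degenerate.

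The only delicate step is the orthogonality between generalized eigenspaces for non-opposite eigenvalues; everything else is a routine consequence of the non-degeneracy on $E$. I do not anticipate any serious obstacle here, since the argument is the standard one for skew (or symplectically skew) operators and works uniformly in the symplectic and orthogonal cases.
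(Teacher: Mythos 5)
Your proof is correct. The paper actually states this lemma without proof (treating it as standard), and your argument --- pairing generalized eigenspaces via the skew-adjointness $\langle Xv,w\rangle_E=-\langle v,Xw\rangle_E$, showing $E_\lambda\perp E_\mu$ unless $\lambda+\mu=0$, and deducing non-degeneracy on $E_0$ from non-degeneracy on $E$ --- is exactly the standard argument the paper implicitly relies on; all steps, including the invertibility of $X+\lambda I$ on $E_\mu$ when $\lambda+\mu\neq 0$, check out.
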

%	\begin{proof}
%		Let $\pm c_1,\dots,\pm c_b$ be nonzero eigenvalues of $X$. Then
%		\begin{equation*}
%			E=E_0\oplus(E_1\oplus E_1^*)\oplus\dots\oplus(E_b\oplus E_b^*),
%		\end{equation*}
%		where $E_0$ is the 0-generalized eigenspace, and $E_i,E_i^*$ are $c_i,-c_i$-generalized eigenspace respectively. Then $E_0$ is orthogonal to $(E_1\oplus E_1^*)\oplus\dots\oplus(E_b\oplus E_b^*)$. Because $\langle\cdot,\cdot\rangle$ is non-degenerated, its restriction to $E_0$ has to be non-degenerated.
		
%		It is clear that if $E$ is symplectic (resp. orthogonal), then $(E_0,\langle\cdot,\cdot\rangle|_{E_0})$ is symplectic (resp. orthogonal). Since $\dim_{\C}E_i=\dim_{\C}E_i^*$, $\dim_{\C}E-\dim_{\C}E_0$ is even. Then we obtain the second assertion.
%	\end{proof}
By Lemma \ref{lm:closedorbitnilpotent}, we obtain the following result.

	\begin{prop}
		\label{prop:closedorbitnilpotent}
		Each closed $G$-orbit in $\g\times E$ has a representative $x\in\fX_G$.
	\end{prop}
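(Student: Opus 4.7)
The plan is to reduce to the nilpotent case via Lemma \ref{lm:closedorbitnilpotent} and then apply the null-cone classification (Theorem \ref{thm:closedorbitsnullcone}) on each factor of the centralizer of the semisimple part. Given $(X,u)$ in a closed $G$-orbit $O$, I would pass to the Jordan decomposition $X=X_\rs+X_\rn$. By Lemma \ref{lm:closedorbitnilpotent}, the $L_{X_\rs}$-orbit $O_\rn=L_{X_\rs}.(X_\rn,u)$ is closed in $\CN_{X_\rs}\times E$, so it suffices to put $X_\rs$ into a standard form, identify $(L_{X_\rs},\CN_{X_\rs}\times E)$ as a product of enhanced standard representations of smaller classical groups, and then invoke Theorem \ref{thm:closedorbitsnullcone} on each factor.

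The key structural input is the spectral analysis of $X_\rs$. Since $X_\rs\in\g$, the relation $\langle X_\rs v,w\rangle_E+\langle v,X_\rs w\rangle_E=0$ forces the nonzero eigenvalues to occur in pairs $\pm c$ of equal multiplicity, and generalized eigenspaces $E_c$ and $E_{c'}$ to be $\langle\cdot,\cdot\rangle_E$-orthogonal whenever $c+c'\neq 0$. Writing the distinct nonzero eigenvalues as $\pm c_1,\dots,\pm c_b$ with $c_i\neq \pm c_j$ for $i\neq j$, I obtain the orthogonal decomposition
\[
E=E_0\oplus\bigoplus_{i=1}^b(E_{c_i}\oplus E_{-c_i}),
\]
where $E_0$ is non-degenerate by Lemma \ref{lm:0eigenspace} and each $E_{c_i}\oplus E_{-c_i}$ is a hyperbolic space with $E_{\pm c_i}$ isotropic and dually paired. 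After conjugating by $G$, $X_\rs$ takes the block-diagonal form with a zero block on $E_0$ and scalar blocks $c_iI_{n_i}$ and $-c_iI_{n_i}$ on $E_{c_i}$ and $E_{-c_i}$ respectively, so that
\[
L_{X_\rs}\cong G_0\times\GL_{n_1}(\C)\times\dots\times\GL_{n_b}(\C),
\]
where $G_0\in\{\Sp_{2n_0}(\C),\RO_{2n_0+1}(\C),\RO_{2n_0}(\C)\}$ is of the same type as $G$ and acts on $E_0$, while each $\GL_{n_i}(\C)$ acts naturally on $E_{c_i}$ and dually on $E_{-c_i}$.

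Correspondingly, every element of $\CN_{X_\rs}$ preserves the decomposition and decomposes as a nilpotent $N_0\in\g_0$ together with nilpotents $N_i\in\gl(E_{c_i})$ whose action on $E_{-c_i}$ is forced to be $-N_i^t$ by the Lie-algebra condition. This yields an $L_{X_\rs}$-equivariant identification of $\CN_{X_\rs}\times E$ with $(\CN_{\g_0}\times E_0)\times\prod_{i=1}^b(\CN_{\gl_{n_i}(\C)}\times\C^{n_i\times 1}\times\C^{1\times n_i})$. Because $O_\rn$ is closed, its projection onto each factor is a closed orbit of the corresponding classical group; Theorem \ref{thm:closedorbitsnullcone} then supplies representatives in $\fN_{G_0}$ and in each $\fN_{\GL_{n_i}(\C)}$, and reassembling these with the canonical form of $X_\rs$ produces an element of $\fX_G$ lying in $O$.

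The principal obstacle I anticipate is the explicit bookkeeping: one must verify the spectral decomposition and the product structure of $L_{X_\rs}$ acting on $\CN_{X_\rs}\times E$ separately for $G=\Sp_{2n}(\C)$, $\RO_{2n+1}(\C)$, and $\RO_{2n}(\C)$, and arrange the ordering of eigenvalues and basis vectors so that the resulting representative literally matches the block shapes prescribed in \eqref{eq:closedsom1}--\eqref{eq:closedsov} rather than merely being $G$-conjugate to one. This is a case-by-case calculation but conceptually routine once Lemma \ref{lm:closedorbitnilpotent} and Theorem \ref{thm:closedorbitsnullcone} are in hand.
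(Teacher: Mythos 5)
Your proposal is correct and follows the same route as the paper: the paper's own proof is exactly "normalize $X_{\rs}$ to diagonal form and apply Lemma \ref{lm:closedorbitnilpotent}," leaving implicit the spectral decomposition of $X_{\rs}$, the identification of $L_{X_{\rs}}$ acting on $\CN_{X_{\rs}}\times E$ as a product of enhanced nilpotent cones, and the appeal to Theorem \ref{thm:closedorbitsnullcone} on each factor — all of which you spell out correctly. The details you fill in (pairing of eigenvalues $\pm c_i$, orthogonality of generalized eigenspaces, non-degeneracy of $E_0$ via Lemma \ref{lm:0eigenspace}, and closedness of the factor orbits of a closed product-group orbit) are precisely what the paper's terse argument relies on.
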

	%\begin{proof}
	%	Let $O$ be a closed $G$-orbit in $\g\times E$ and let $(X,u)\in O$. We may assume that $X_{\rs}$ is a diagonal matrix in $\g$. Then the proposition follows from .
	%\end{proof}
    In what follows, we are going to show that   $Gx$ is  closed  for every $x\in\fX_{G}$.
    
    \begin{lm}
    	\label{lm:closedmaximal}
    	Let $x=(X,u)\in\g\times E$. If $\{u,Xu,\dots,X^{m-1}u\}$ is a basis of $E$, then $Gx$ is a closed orbit and the stabilizer of $x$ is trivial.
    \end{lm}
    \begin{proof}
    	This is proved in \cite{RS}*{Theorem 17.1}.
    \end{proof}
    
	\begin{lm}
		\label{lm:closedorbitmaximalso}
		Let $x=(X,u)\in\fX_G$. Write $X,u$ as in \eqref{eq:closedsom1} and \eqref{eq:closedsov}. Assume that $x$ satisfies the following conditions:
        \begin{itemize}
            \item $N_i=J_{n_i}$, $v^{(i)}=(1,0,\dots,0)$, $u^{(i)}=(u_1^{(i)},\dots,u_{n_i}^{(i)})^t$ with $u_{n_i}^{(i)}\neq 0$ for $1\leq i\leq b$,
            \item $\left(\left(\begin{matrix}
				N_0^{(1)}&N_0^{(2)}\\&N_0^{(3)}
			\end{matrix}\right),\left(\begin{matrix}
			    u^{(0)}\\v^{(0)}
			\end{matrix}\right)\right)\in\fM_{G_0}$, where $G_0$ is defined by \eqref{eq:subgroup}.
        \end{itemize}
		Let $V=\Span_{\C}\{u,Xu,\dots,X^{m-1}u\}$. Then $V$ is an orthogonal subspace of codimension 1, if $G=\RO_{2n}(\C)$ and $n_0\neq 0$. Otherwise $V=E$.
	\end{lm}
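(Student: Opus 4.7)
The plan is to exploit the generalized eigenspace decomposition of $E$ under $X$. Since $X$ is block-diagonal with the ``zero block'' $\left(\begin{smallmatrix} N_0^{(1)} & N_0^{(2)} \\ & N_0^{(3)} \end{smallmatrix}\right)$ (nilpotent) together with Jordan-type blocks $c_iI_{n_i}+J_{n_i}$ and $-c_iI_{n_i}-J_{n_i}^t$ for $i=1,\dots,b$, and since the scalars $c_i\neq \pm c_j$ for $i\neq j$, the eigenvalues $0, \pm c_1,\dots,\pm c_b$ are pairwise distinct. Hence $E=E_0\oplus\bigoplus_{i=1}^b(E_{c_i}\oplus E_{-c_i})$, and since $V$ is $X$-stable,
\[
V=\bigoplus_{\lambda}(V\cap E_\lambda),\qquad V\cap E_\lambda=\C[X|_{E_\lambda}]\,u_\lambda,
\]
where $u_\lambda$ denotes the $E_\lambda$-component of $u$. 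This reduces the problem to verifying, block by block, when $u_\lambda$ is a cyclic vector for $X|_{E_\lambda}$.

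Next I would handle the blocks $E_{\pm c_i}$ for $i\geq 1$. On $E_{c_i}$, the component of $u$ is $u^{(i)}=(u_1^{(i)},\dots,u_{n_i}^{(i)})^t$ with $u_{n_i}^{(i)}\neq 0$, and $X|_{E_{c_i}}=c_iI+J_{n_i}$; applying $X-c_iI=J_{n_i}$ repeatedly produces vectors whose leading nonzero entries shift by one and stay nonzero (the antidiagonal is $u_{n_i}^{(i)}$), hence $\{u^{(i)},(X-c_iI)u^{(i)},\dots,(X-c_iI)^{n_i-1}u^{(i)}\}$ is a basis of $E_{c_i}$. Dually, on $E_{-c_i}$, the component is $v^{(i)}=(1,0,\dots,0)^t$, and $X|_{E_{-c_i}}=-c_iI-J_{n_i}^t$; since $J_{n_i}^t$ shifts the $1$ down the column, $v^{(i)}$ is cyclic for $-c_iI-J_{n_i}^t$. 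Therefore $V\cap E_{\pm c_i}=E_{\pm c_i}$ for each $i\ge 1$.

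The zero block $E_0$ is exactly the setting of the maximal-orbit results in Section~\ref{section:closedmaximal}: the matrix $\left(\begin{smallmatrix} N_0^{(1)} & N_0^{(2)} \\ & N_0^{(3)}\end{smallmatrix}\right)$ has precisely the form of \eqref{eq:symplecticclosedmaximal}, \eqref{eq:oddorthogonalclosedmaximal}, or \eqref{eq:evenorthogonalclosedmaximal}, and the hypothesis $v_1^{(0)}\neq 0$ corresponds to $u_{n_0+1}\neq 0$ (respectively $u_{n_0+2}\neq 0$) in those settings. In the symplectic and odd orthogonal cases, the explicit computations preceding Corollaries~\ref{cor:symplecticclosedmaximal} and \ref{cor:oddorthogonalclosedmaximal} show that $\{X^j(u^{(0)},v^{(0)})^t\}_{j=0}^{\dim E_0-1}$ is a basis of $E_0$, whence $V\cap E_0=E_0$ and therefore $V=E$. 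In the even orthogonal case with $n_0\neq 0$, Lemma~\ref{lm:evenorthogonalclosedmaximal} gives $V\cap E_0$ of codimension one in $E_0$, with the form $\langle\cdot,\cdot\rangle_E$ restricting non-degenerately.

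To finish, I would invoke the standard fact (a one-line consequence of $\langle Xu,v\rangle_E+\langle u,Xv\rangle_E=0$) that generalized eigenspaces $E_\lambda$ and $E_\mu$ are orthogonal unless $\mu=-\lambda$, so $E_0\perp E_{\pm c_i}$ and the form restricts non-degenerately to each $E_{c_i}\oplus E_{-c_i}$. Combining with the previous paragraph, in every case except $G=\RO_{2n}(\C)$ with $n_0\neq 0$ we have $V=E$, while in the remaining case $V=(V\cap E_0)\oplus\bigoplus_{i=1}^b(E_{c_i}\oplus E_{-c_i})$ is codimension one in $E$ and the restricted form is non-degenerate. The main subtlety is bookkeeping the index conventions so that the ``$v_1^{(0)}\neq 0$'' hypothesis lines up with the non-zero entry required in each of the three distinguished nilpotent models; all the heavy lifting on $E_0$ has already been done in Section~\ref{section:closedmaximal}.
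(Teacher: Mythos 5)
Your proof is correct and follows essentially the same route as the paper: decompose $E$ into generalized eigenspaces of $X$, observe that $V=\C[X]u$ splits accordingly, verify cyclicity on each $E_{\pm c_i}$ via the Jordan-block structure (the paper does this by projecting the explicit vectors from the proof of Proposition \ref{prop:closedgln}), and reduce the $E_0$-block to the computations of Section \ref{section:closedmaximal}, with the non-degeneracy on $V$ coming from the orthogonality of $E_\lambda$ and $E_\mu$ for $\mu\neq-\lambda$. The only cosmetic difference is that you invoke the abstract identity $\C[X]u=\bigoplus_\lambda\C[X|_{E_\lambda}]u_\lambda$ where the paper instead exhibits explicit polynomial combinations $\prod(X\pm c_iI_n)^{\bullet}u$ and the vector $u_0$; both arguments carry the same content.
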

	\begin{proof}
		Let $E_0$ be the generalized 0-eigenspace of $X$, $E_i$ the generalized $c_i$-eigenspace, and $E_i^*$ the generalized $-c_i$-eigenspace, for $1\leq i\leq b$. Put
		\begin{equation*}
			E^{\prime}=(E_1\oplus E_1^*)\oplus\dots\oplus(E_b\oplus E_b^*).
		\end{equation*}
		Let $p$ be the projection from $E$ to $E'$ with respect to the decomposition $E=E_0\oplus E'$.
		
		Let $v=\prod_{i=1}^{b}(X+c_iI_n)^{n_i}u$,
		\begin{equation*}
			S_i^*=\left\{(X+c_iI_n)^k\prod_{l=i+1}^{b}(X+c_lI_n)^{n_l}u\mid k=0,\dots,n_i-1\right\},
		\end{equation*}
		and
		\begin{equation*}
			S_i=\left\{(X-c_iI_n)^k\prod_{l=i+1}^{b}(X-c_lI_n)^{n_l}v\mid k=0,\dots,n_i-1\right\}
		\end{equation*}
		for $1\leq i\leq b$. Put $S=\bigcup_{i=1}^b(S_i^*\cup S_i)$. By the proof of Proposition \ref{prop:closedgln}, $p(S)$ is a basis of $E^{\prime}$. Let
		\begin{equation*}
			u_0=\prod_{i=1}^b(X-c_iI_n)^{n_i}(X+c_iI_n)^{n_i}u
		\end{equation*}
		Then $u_0\in E_0$ and
		\begin{equation*}
			u_0=(s_1,\dots,s_{n_0^{\prime}},\underbrace{0,\dots,0}_{n_1+\dots+n_b},s_{n_0^{\prime}+1},\dots,s_{n_0^{\prime}+n_0},\underbrace{0,\dots,0}_{n_1+\dots+n_b})
		\end{equation*}
		with $s_{n_0^{\prime}+1}=(-1)^bc_1^2\dots c_b^2v_1^{(0)}\neq 0$.
		%
		%the images under $p$ of following vectors:
		%\begin{equation}
		%	\label{eq:closedmaximalso}
		%	\begin{gathered}
		%	    u,(X+c_bI_{n})u,\dots,(X+c_bI_{n})^{n_b}u,\dots,(X+c_1I_n)\prod_{i=2}^b(X+c_iI_n)^{n_i}u,\dots,\prod_{i=1}^b(X+c_iI_n)^{n_i}u,\\
		%	    (X-c_bI_n)\prod_{i=1}^b(X+c_iI_n)^{n_i}u,\dots,(X-c_bI_n)^{n_b}\prod_{i=1}^b(X+c_iI_n)^{n_i}u,\dots,\\
		%	    (X-c_1I_n)\prod_{i=2}^b(X-c_iI_n)^{n_i}\prod_{j=1}^b(X+c_jI_n)^{n_j}u,\dots,(X-c_1I_n)^{n_1-1}\prod_{i=2}^b(X-c_iI_n)^{n_i}\prod_{j=1}^b(X+c_jI_n)^{n_j}u
		%	\end{gathered}
		%\end{equation}
		
		Let $V=\Span_{\C}\{u,Xu,\dots,X^{m-1}u\}$ and $T=\{u_0,Xu_0,\dots,X^{n_0^{\prime}+n_0-1}u_0\}$. Assume that $G=\Sp_{2n}(\C)$ or $\RO_{2n+1}(\C)$. Then $T$ is a basis of $E_0$. Since $S$ and $T$ are contained in $V$, we have $V=E$.
		
		Assume that $G=\RO_{2n}(\C)$. If $n_0=0$, then $V=E$. Now assume that $n_0\geq 1$. Let $V_0=\Span_{\C}\{u_0,Xu_0,\dots,X^{2n_0-2}u_0\}$. Then $V=V_0\oplus E^{\prime}$, and hence $V^{\bot}$ equals the orthogonal complement of $V_0$ in $E_0$. Therefore $\dim_{\C}V^{\bot}=1$.
	\end{proof}
	\begin{prop}
		\label{prop:closedorbitso}
		For every $x\in\fX_G$, $Gx$ is a closed orbit.
	\end{prop}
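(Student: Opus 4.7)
The strategy parallels that of Proposition \ref{prop:closedgln}: I plan to apply Luna's criterion (Theorem \ref{thm:Lunacriterion}) to reduce the closedness of $Gx$ to the closedness of $Z_G(H)x$ for a carefully chosen reductive subgroup $H\le G_x$. Write $x=(X,u)\in\fX_G$ with the block decomposition \eqref{eq:closedsom1}--\eqref{eq:closedsov}, so that for each $1\le i\le b$ the triple $(N_i,u^{(i)},v^{(i)})$ is determined by an integer $k_i\in[0,n_i]$ as in \eqref{eq:closedorbitnullconeGL}, and the $0$-eigenvalue component $(N_0^{(1)},N_0^{(2)},N_0^{(3)},u^{(0)},v^{(0)})\in\fN_{G_0}$ is determined by an analogous integer $k_0$.

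I would take $H$ to be the block-diagonal subgroup of $G$ whose $i$-th factor $(1\le i\le b)$ is the copy of $\GL_{n_i-k_i}(\C)$ provided by Proposition \ref{prop:closedorbitgln}, embedded through the paired $\pm c_i$-eigenblocks by $g\mapsto\diag(g,g^{-t})$ so as to preserve the form on $E$, and whose $0$-th factor is the classical stabilizer of the $0$-piece furnished by Propositions \ref{prop:closedorbitsymplectic}, \ref{prop:closedorbitoddorthogonal} or \ref{prop:closedorbitevenorthogonal} (namely $\Sp_{2(n_0-k_0)}(\C)$, $\RO_{2(n_0-k_0)+1}(\C)$ or $\RO_{2(n_0-k_0)}(\C)$). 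Then $H$ is reductive, $x\in(\g\times E)^H$, and a direct bookkeeping computation identifies $Z_G(H)$ as a product of factors $\GL(V_i)$ acting on the "regular" parts $V_i$ of each $\pm c_i$-eigenspace pair, together with a classical group $G_0'$ of the same type as $G$ acting on the "regular" part of the $0$-eigenspace.

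Inside $Z_G(H)$, the point $x$ now satisfies precisely the hypotheses of Lemma \ref{lm:closedorbitmaximalso}, so the cyclic span $V=\Span_{\C}\{u,Xu,\dots,X^{m-1}u\}$ either equals $E$, or (only when $G=\RO_{2n}(\C)$ with $n_0\ne 0$) is a non-degenerate hyperplane in $E$. In the first case Lemma \ref{lm:closedmaximal} directly gives that $Z_G(H)x$ is closed with trivial stabilizer. In the second case, the argument of Lemma \ref{lm:evenorthogonalrestriction} shows that $X\in\g(V)$, so $x\in\g(V)\times V$, and Proposition \ref{prop:evenorthogonalclosedmaximal} applied on $V$ produces a closed $G(V)$-orbit which is exactly $Z_G(H)x$ inside the $Z_G(H)$-stable closed subvariety $\g(V)\times V\subseteq\g\times E$. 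Either way, $Z_G(H)x$ is closed, and Luna's criterion promotes this to closedness of $Gx$.

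The main obstacle is the organizational one: setting up $H$ so that its centralizer $Z_G(H)$ is genuinely the expected product of classical groups on the "regular" directions uniformly across the three families $\Sp_{2n}(\C)$, $\RO_{2n+1}(\C)$, $\RO_{2n}(\C)$, and verifying in the last case that the maximal-type orbit identified by Lemma \ref{lm:closedorbitmaximalso} really coincides with the closed orbit of Proposition \ref{prop:evenorthogonalclosedmaximal}. Once this reduction to the base cases is cleanly carried out, the proof is a routine application of Luna's criterion together with the closedness results of Section \ref{section:closedmaximal}.
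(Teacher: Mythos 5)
Your overall strategy --- Luna's criterion (Theorem \ref{thm:Lunacriterion}) applied to the reductive subgroup $H=G_x$, followed by a reduction to the ``maximal'' case handled by Lemmas \ref{lm:closedorbitmaximalso} and \ref{lm:closedmaximal} (and Lemma \ref{lm:evenorthogonalrestriction}, Proposition \ref{prop:evenorthogonalclosedmaximal} in the even orthogonal case) --- is exactly the paper's. But there is an internal inconsistency in the middle step. The centralizer $Z_G(H)$ of your block-diagonal $H$ is \emph{not} the product $\prod_i\GL(V_i)\times G_0'$: since $H$ fixes the cyclic span $V=\Span_{\C}\{u,Xu,\dots,X^{m-1}u\}$ pointwise and $V$ is nondegenerate, $Z_G(H)$ contains the full isometry group $G(V)$, which mixes the various generalized eigenspaces of $X$ and is strictly larger than your block product; the remaining factors of $Z_G(H)$ are central tori of the form $\diag(cI,c^{-1}I)$ on the non-regular parts of the paired eigenspaces (plus the center of the $0$-block stabilizer), all of which lie in $G_x$ and so do not enlarge the orbit. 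This mismatch matters because Luna's criterion requires closedness of the orbit of the \emph{actual} centralizer, whereas the group whose orbit you then prove closed via Lemma \ref{lm:closedmaximal} is $G(V)$, not your block product. The two slips happen to cancel: $Z_G(H)x=G(V)x$, and that is precisely the orbit the cited lemmas control --- which is also how the paper argues, identifying $Z_G(G_x)$ with $G(V)$ directly.

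A second imprecision: for a general $x\in\fX_G$ the cyclic span is far from all of $E$ (take $x=(0,0)$, or any block with $k_i<n_i$), so your assertion that ``$V$ either equals $E$ or is a non-degenerate hyperplane in $E$'' must be read with $E$ replaced by the direct sum of the regular parts of the generalized eigenspaces; only after restricting to that subspace does $(X|_V,u)$ satisfy the hypotheses of Lemma \ref{lm:closedorbitmaximalso}, and one must also note that $X$ preserves $V^{\bot}$ so that $G(V)x$ sits inside the closed subset $\left(\g(V)\oplus\{X|_{V^{\bot}}\}\right)\times V$ of $\g\times E$. Finally, a bookkeeping slip: the stabilizer of the $0$-piece is $\RO_{2(n_0-k_0)}(\C)$ when $G_0=\RO_{2n_0+1}(\C)$ and $\RO_{2(n_0-k_0)+1}(\C)$ when $G_0=\RO_{2n_0}(\C)$ (Propositions \ref{prop:closedorbitoddorthogonal} and \ref{prop:closedorbitevenorthogonal}); your list appears to swap these. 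None of this is fatal, but as written the proof does not cohere until $Z_G(H)$ is correctly identified as $G(V)$ up to factors contained in $G_x$.
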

	\begin{proof}
		Write $x=(X,u)$. Let $V=\Span_{\C}\{u,Xu,\dots,X^{m-1}u\}$. Then $x\in\g(V)\times V$. By Lemma \ref{lm:closedorbitmaximalso}, the restriction of the bilinear form $\langle\cdot,\cdot\rangle_E$ to $V$ is nondegenerate. Let $H=G_x$. Then $H=G(V^{\bot})$ and hence $Z_G(H)=G(V)$. By Lemma \ref{lm:closedmaximal}, $Z_G(H)x$ is a closed orbit in the closed subset $\g(V)\times V$. Therefore $Gx$ is a closed orbit by Theorem \ref{thm:Lunacriterion}.
	\end{proof}
    \subsection{Proof of Theorem \ref{thm:closedorbitsmvw}}
Here we give a proof of Theorem \ref{thm:closedorbitsmvw}, which states that every closed $G$-orbit in $\g\times E$ is $\breve{G}$-stable.

The following result is obvious.

	\begin{lm}
		\label{lm:mvwclosed}
		Let $H$ be a reductive group acting on an affine variety $X$, and let $K$ be a closed subgroup of $H$ which has index $2$. Then each closed $K$-orbit is $H$-stable if and only if $\C[X]^H=\C[X]^K$.
	\end{lm}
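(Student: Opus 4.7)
The plan is to exploit the standard decomposition of $\C[X]^K$ induced by the action of $H/K$. Since $K$ has index $2$ in $H$, it is automatically normal, and $H/K = \{1,\sigma\}$ acts on $\C[X]^K$ with fixed subalgebra $\C[X]^H$. In characteristic zero, the averaging projector yields
\begin{equation*}
\C[X]^K = \C[X]^H \oplus V^-, \qquad V^- = \{f \in \C[X]^K \mid \sigma.f = -f\}.
\end{equation*}
Thus the equality $\C[X]^H = \C[X]^K$ is equivalent to the vanishing of $V^-$, and it is this reformulation I would verify in each direction.

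For the ``if'' direction, I would argue that the assumption $\C[X]^H = \C[X]^K$ forces the categorical quotient morphisms $\pi_H : X \to H \bac X$ and $\pi_K : X \to K \bac X$ to coincide. Given a closed $K$-orbit $O$ and a representative $\sigma$ of the nontrivial coset, normality of $K$ together with the fact that $\sigma$ acts on $X$ by an automorphism shows that $\sigma.O$ is again a closed $K$-orbit, lying in the same fiber of $\pi_K$ as $O$. Since each fiber of $\pi_K$ contains a unique closed $K$-orbit, $\sigma.O = O$, and $O$ is $H$-stable.

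For the ``only if'' direction, I would take an arbitrary $f \in V^-$ and aim to prove $f \equiv 0$. By $K$-invariance, $f$ descends to a function $\bar f$ on $K \bac X$. For any closed $K$-orbit $O$ and any $x \in O$, the hypothesis that $O$ is $H$-stable gives $\sigma^{-1}.x \in O = K.x$, so $f(\sigma^{-1}.x) = f(x)$; combined with $\sigma.f = -f$ this forces $f(x) = 0$. Since every fiber of $\pi_K$ contains a closed $K$-orbit, $\bar f$ vanishes identically on $K \bac X$, whence $f = 0$.

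There is no substantial obstacle: the proof rests on two standard facts about categorical quotients for reductive group actions---that $\pi_K$ is determined by $\C[X]^K$ and that each fiber of $\pi_K$ contains a unique closed orbit---combined with the observation that normality of $K$ in $H$ (automatic from index $2$) ensures $H$-translates of $K$-orbits are themselves $K$-orbits.
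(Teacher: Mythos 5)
Your proof is correct: the isotypic decomposition $\C[X]^K=\C[X]^H\oplus V^-$, the uniqueness of the closed orbit in each fiber of $\pi_K$ (valid since $K$, having finite index in the reductive group $H$, is itself reductive), and the surjectivity of $\pi_K$ are exactly the ingredients needed, and both directions go through as you describe. The paper simply declares this lemma obvious and omits any argument, so your write-up supplies the standard proof that the author had in mind.
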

%	\begin{proof}
%		Let $\rho:K\bac X\to H\bac X$ be induced by the inclusion $\C[X]^H\subseteq\C[X]^K$. Then the following diagram commutes:
%		\begin{equation*}
%			\begin{tikzcd}
%				& X \arrow[ld, "\pi_{K\bac X}"'] \arrow[rd, "\pi_{H\bac X}"] &         \\
%				K\bac X \arrow[rr, "\rho"] &                           %                                & H\bac X
%			\end{tikzcd}
%		\end{equation*}
%		If each closed $K$-orbit is $H$-stable, $\rho$ is an isomorphism and hence $\C[X]^H=\C[X]^K$.
		
%		Assume that $\C[X]^H=\C[X]^K$. Then $\rho$ is the identity morphism. Let $Y=H\bac X=K\bac X$ and let $\pi=\pi_{H\bac X}=\pi_{K\bac X}$. Fix $y\in Y$. Let $x\in\pi^{-1}(y)$ such that $Kx$ is a closed orbit. Then
%		\begin{equation*}
%			Hx=\bigcup_{\dot{h}\in H/K}hHx
%		\end{equation*}
%		is a closed orbit in $\pi^{-1}(y)$. Since $[H:K]=2$, we have $hK=Kh$ for $h\in H$. Thus $hKx=Khx$ is a closed $K$-orbit in $\pi^{-1}(y)$ and hence $hKx=Kx$ for $h\in G$. Therefore $Hx=Kx$.
%	\end{proof}

In view of Lemma \ref{lm:mvwclosed}, Theorem \ref{thm:closedorbitsmvw} is implied by the following result. 
	\begin{lm}
		\label{lm:mvwinvariant}
		We have $\C[\g\times E]^{\breve{G}}=\C[\g\times E]^{G}$.
	\end{lm}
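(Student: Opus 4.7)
The forward inclusion $\C[\g\times E]^{\breve{G}}\subseteq\C[\g\times E]^G$ is immediate from $G\subseteq\breve{G}$, so my plan reduces to proving the reverse inclusion. By Theorem \ref{thm:structure}, the algebra $\C[\g\times E]^G$ admits the explicit finite generating set $\mathfrak{A}_G$, so it is enough to verify that every element of $\mathfrak{A}_G$ is fixed by the nontrivial coset representative $(1,-1)\in\breve{G}/G$. I will carry this out case-by-case for the three families of classical groups, since each involves a slightly different involution on $\g\times E$.

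For $G=\GL_n(\C)$, the involution acts by $(X,u,v)\mapsto(X^t,-v^t,-u^t)$. Invariance of $\tr_i$ reduces to the trivial identity $\tr((X^t)^i)=\tr(X^i)$, and invariance of $\mu_j=vX^ju$ follows by observing that $(-u^t)(X^t)^j(-v^t)$ equals $(vX^ju)^t$, which coincides with $vX^ju$ since the quantity is a scalar. For $G=\RO_n(\C)$, the involution sends $(X,u)$ to $(-X,-u)$, so $\tr_i$ acquires a factor of $(-1)^i$ and $\eta_j$ acquires $(-1)^j$ after the two sign flips on the $u$'s in $\eta_j$ cancel. These give invariance precisely for even $i$ and even $j$, matching the indexing of the generators in $\mathfrak{A}_G$ in the orthogonal case.

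The case requiring the most sign bookkeeping, and the one where I anticipate any real subtlety, is $G=\Sp_{2n}(\C)$, where $(1,-1)$ acts by $(X,u)\mapsto(-I_{n,n}XI_{n,n},-I_{n,n}u)$. The key algebraic input is the identity $I_{n,n}\beta_{2n}I_{n,n}=-\beta_{2n}$, together with $I_{n,n}^2=I_{2n}$. Invariance of $\tr_{2i}$ reduces to conjugation-invariance of trace once the sign $(-1)^{2i}=1$ is extracted. For $\eta_j$, a direct computation using these identities produces an overall factor of $(-1)^{j+1}$, so $\eta_j$ is fixed by $(1,-1)$ exactly when $j$ is odd; this precisely matches the indexing of the $\eta_j$ in $\mathfrak{A}_G$ in the symplectic case. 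Assembling the three cases yields the reverse inclusion and hence the lemma.
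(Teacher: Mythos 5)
Your proposal is correct and follows essentially the same route as the paper: reduce via Theorem \ref{thm:structure} to checking that each generator in $\mathfrak A_G$ is fixed by the nontrivial coset of $G$ in $\breve G$, and verify this case by case with the same sign computations (the paper checks a general element $(g,-1)$ in the symplectic case, while you check only $(1,-1)$, which suffices since $G$-invariance is already known). The sign bookkeeping you describe, including the identity $I_{n,n}\beta_{2n}I_{n,n}=-\beta_{2n}$ yielding the factor $(-1)^{j+1}$ for $\eta_j$ in the symplectic case, is accurate.
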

	\begin{proof}
		It follows immediately from Theorem \ref{thm:structure}.

	\end{proof}

%    \begin{theorem}
%		\label{thm:mainsimple}
%		Let $x\in\g\times E$ be such that $Gx$ is a closed orbit. The descendant $G_x\curvearrowright N_{Gx,x}^{\g\times E}$ must be as following:
		
%		(1) If $G=\GL_n(\C)$, then there exist $k_1,\dots,k_b\geq 1$ with $k_1+\dots+k_b\leq n$ such that
%		\begin{equation*}
%			G_x\simeq H
%		\end{equation*}
 %       where $H=\GL_{k_1}(\C)\times\dots\times\GL_{k_b}(\C)$. Moreover, $N_{Gx,x}^{\g\times E}$ is isomorphic to the direct sum of $2k$ copies of the trivial representation, and the enhanced standard representation of $H$, where $k\geq 0$ such that $k_1+\dots+k_b+k=n$.
		
%		(2) If $G=\Sp_{2n}(\C)$, then there exist $l\geq 0$ and $k_1,\dots,k_b\geq 1$ with $l+k_1+\dots+k_b\leq n$ such that
%		\begin{equation*}
%			G_x\simeq H,
%		\end{equation*}
%		where $H=\Sp_{2l}(\C)\times\GL_{k_1}(\C)\times\dots\times\GL_{k_b}(\C)$. Moreover, $N_{Gx,x}^{\g\times E}$ is isomorphic to the direct sum of $2k$ copies of the trivial representation, and the enhanced standard representation of $\breve{H}$, where $k\geq 0$ such that $l+k_1+\dots+k_b+k=n$.
		
%		(3) If $G=\RO_{n}(\C)$, then there exist $l\geq 0$ and $k_1,\dots,k_b\geq 1$ with $2k_1+\dots+2k_b+l\leq n$ such that
%		\begin{equation*}
%			G_x\simeq H,
%		\end{equation*}
%		where $H=\RO_{l}(\C)\times\GL_{k_1}(\C)\times\dots\times\GL_{k_b}(\C)$. Moreover, $N_{Gx,x}^{\g\times E}$ is isomorphic to the direct sum of $2k+\gamma$ copies of the trivial representation, and the enhanced standard representation of $\breve{H}$,
%		where $\gamma\in\{0,1\}$ and $k\geq 0$ such that $l+2k_1+\dots+2k_b+2k+\gamma=n$.
%	\end{theorem}

	\section{Proof of Theorem \ref{thm:mainmvwsimple}}\label{sec:6}
    In this section, we give a proof of Theorem \ref{thm:mainmvwsimple}.

    Let $O$ be a closed $G$-orbit, and let $x\in O$. Let $\g_x$ be the Lie algebra of the stabilizer $G_x$. Then we have a $G_x$-module isomorphism
	\begin{equation*}
		T_xO=\g x\simeq\g/\g_x,
	\end{equation*}
	where $G_x$ acts by the adjoint action on $\g/\g_x$. Therefore the normal space
	\begin{equation}
		\label{eq:descent1}
		N_{O,x}^{\g\times E}\simeq\g_x\times E
	\end{equation}
	as rerpesentations of $G_x$, where $G_x$ acts on $\g_x$ by the adjoint action and acts on $E$ via the inclusion $G_x\subseteq G$.
	
	\subsection{Descents at $x\in\fN_{G}$ for $G=\GL_n(\C)$}
    In this subsection, we assume that $G=\GL_n(\C)$. Fix $x=x(k,y_1,\dots,y_k)\in\fN_{G}$.
    
    The following result is obvious.
    
    \begin{lm}
    	\label{lm:descentgln}
    	Let $h_1,h_2\in\GL_m(\C)$ be symmetric. For $i=1,2$, consider representations $(\rho_i,\C^{m\times 1}\times\C^{1\times m})$ and $(\omega_i,\gl_m(\C))$ of $\{\pm 1\}$ defined by
    	\begin{equation*}
    		\rho_i(-1)(u,v)=(-h_iv^t,-u^th_i^{-1})\quad\text{and}\quad\omega_i(-1)X=h_iX^th_i^{-1}.
    	\end{equation*}
    	Then
    	\begin{equation*}
    		\rho_1\simeq\rho_2,\quad (u,v)\mapsto(h_2h_1^{-1}u,v)
    	\end{equation*}
    	and
    	\begin{equation*}
    		\omega_1\simeq\omega_2,\quad X\mapsto h_2h_1^{-1}X.
    	\end{equation*}
    	In particular, $\rho_i\simeq\triv^m\oplus\sgn^m$ and $\omega_i\simeq\triv^{m(m+1)/2}\oplus\sgn^{(m-1)m/2}$ for $i=1,2$, where $\sgn$ denotes the sign character.
    \end{lm}
    
    Now we prove Theorem \ref{thm:mainmvwsimple} for $G=\GL_n(\C)$ and $x\in\fN_{G}$.
    \begin{prop}
    	\label{prop:descentglnmvw}
    	The stabilizer $\breve{\GL}_n(\C)_x\simeq\breve{\GL}_{n-k}(\C)$, and
    	\begin{equation}
    		\label{eq:descentgln}
    		N_{O,x}^{\g\times E}\simeq(\gl_{n-k}(\C)\times\C^{(n-k)\times 1}\times\C^{1\times(n-k)})\oplus\triv^{2k}
    	\end{equation}
    	as representations of $\breve{\GL}_{n-k}(\C)$.
    \end{prop}
    \begin{proof}
    	We have seen that
    	\begin{equation*}
    		\GL_n(\C)_x=\left\{\diag(I_k,g)\mid g\in\GL_{n-k}(\C)\right\}\simeq\GL_{n-k}(\C).
    	\end{equation*}
    	By Theorem \ref{thm:closedorbitsmvw}, we have $\breve{\GL}_n(\C)x=\GL_{n}(\C)x$, and hence $\GL_n(\C)_x$ is a subgroup of $\breve{\GL}_n(\C)_x$ of index two.
    	
    	Let $y=(y_1,\dots,y_k)$ and $g_0=\diag(-\psi(y),I_{n-k})\in\GL_n(\C)$. Then we have $(g_0,-1)\in\breve{\GL}_n(\C)_x$, and the map $\breve{\GL}_n(\C)_x\to\breve{\GL}_{n-k}(\C)$ defined by
    	\begin{equation*}
    		(\diag(I_k,g),1)\mapsto g\quad\text{and}\quad(g_0,-1)\mapsto(I_{n-k},-1).
    	\end{equation*}
    	is an isomorphism.
    	
    	Now we consider the action of $\breve{\GL}_{n-k}(\C)$ on $N_{O,x}^{\g\times E}$. For $z\in\g\times E$, write
    	\begin{equation*}
    		z=\left(\left(\begin{matrix}
    			A&B\\C&D
    		\end{matrix}\right),\left(\begin{matrix}
    		    u\\u'
    		\end{matrix}\right),(v,v')\right),
    	\end{equation*}
    	where $A\in\gl_k(\C),\ B\in\Mat_{k\times(n-k)}(\C),\ C\in\Mat_{(n-k)\times k}(\C),\ D\in\gl_{n-k}(\C),\ u\in\C^{k\times 1},\ u'\in\C^{(n-k)\times 1},\ v\in\C^{1\times k}$ and $v'\in\C^{1\times (n-k)}$. The action of $\breve{\GL}_{n-k}(\C)$ on $\g\times E$ is given by
    	\begin{equation*}
    		(g,1).z=\left(\left(\begin{matrix}
    			A&Bg^{-1}\\gC&gDg^{-1}
    		\end{matrix}\right),\left(\begin{matrix}
    			u\\gu'
    		\end{matrix}\right),(v,v'g^{-1})\right)
    	\end{equation*}
    	and
    	\begin{equation*}
    		(I_{n-k},-1).z=\left(\left(\begin{matrix}
    			\psi(y)A^t\psi(y)^{-1}&\psi(y)C^t\\B^t\psi(y)^{-1}&D
    		\end{matrix}\right),\left(\begin{matrix}
    			-\psi(y)v^t\\-v'^t
    		\end{matrix}\right),(-u^t\psi(y)^{-1},-u'^t)\right).
    	\end{equation*}
    	The tangent space
    	\begin{equation*}
    		T_xO=\left\{w=\left(\begin{matrix}
    			A&B\\C&0_{(n-k)\times(n-k)}
    		\end{matrix}\right)\mid A\in\gl_k(\C),\ B\in\Mat_{k\times(n-k)}(\C),\text{ and }C\in\Mat_{(n-k)\times k}(\C)\right\}
    	\end{equation*}
    	with the action of $\breve{\GL}_{n-k}(\C)$ given by
    	\begin{equation*}
    		(g,1).w=\left(\begin{matrix}
    			A&Bg^{-1}\\gC&0_{(n-k)\times(n-k)}
    		\end{matrix}\right)\quad\text{and}\quad(I_{n-k},-1).w=\left(\begin{matrix}
    		-\psi(y)A^t\psi(y)^{-1}&\psi(y)C^t\\B^t\psi(y)^{-1}&0_{(n-k)\times(n-k)}
    		\end{matrix}\right).
    	\end{equation*}
    	Comparing these two representations, we obtain \eqref{eq:descentgln} by Lemma \ref{lm:descentgln}.
    \end{proof}
    
    \subsection{Descents at $x\in\fN_{G}$ for $G=\Sp_{2n}(\C)$ or $\RO_m(\C)$}
    In this subsection, assume that $G=\Sp_{2n}(\C),\RO_{2n+1}(\C)$ or $\RO_{2n}(\C)$. Let $m=\dim_{\C}E$. Fix $x=(X,u)\in\fN_{G}$ with $r(x)=k$. Let $V=\Span_{\C}\{u,Xu,\dots,X^{m-1}u\}$ and let
    \begin{equation*}
    	(\gamma,\gamma')=\begin{cases*}
    		(0,1),&if $G=\Sp_{2n}(\C)$, or $(X,u)=(0,0)$,\\
    		(1,0),&if $G=\RO_{2n+1}(\C)$ and $(X,u)\neq(0,0)$,\\
    		(-1,0),&if $G=\RO_{2n}(\C)$ and $(X,u)\neq(0,0)$.
    	\end{cases*}
    \end{equation*}
    
    \begin{lm}
    	\label{lm:descent2}
    	Assume that $V\neq\{0\}$. Let $h'\in\g(V)$ be defined by
    	\begin{equation*}
    		h'(X^iu)=(-1)^{i+1}X^iu\quad\text{for }0\leq i\leq\dim_{\C}V-1.
    	\end{equation*}
    	Consider the representation of $\{\pm 1\}$ on $\g(V)$ given by $(-1).Y=-h'Yh'^{-1}$. Then
    	\begin{equation*}
    		\g(V)\simeq\triv^{k(k+\gamma+\gamma')}\oplus\sgn^{(k+\min\{\gamma,\gamma'\})^2}.
    	\end{equation*}
    \end{lm}
    \begin{proof}
    	We prove for $G=\Sp_{2n}(\C)$. The other cases are proved similarly. Let $J_e=\{0\leq i\leq 2k-1\mid i\text{ is even}\}$ and $J_o=\{0\leq i\leq 2k-1\mid i\text{ is odd}\}$. Identify $Y\in\g(V)$ with a matrix with respect to the basis $\{X^iu\mid i\in J_e\}\cup\{X^iu\mid i\in J_o\}$. Then
    	\begin{equation*}
    		\g(V)=\left\{\left(\begin{matrix}
    			Y_{1}&Y_{2}\\Y_{3}&Y_{4}
    		\end{matrix}\right)\mid Y_{i}\in\gl_k(\C),\ AY_1+Y_4^tA=AY_2-Y_2^tA=AY_3-Y_3^tA=0\right\},
    	\end{equation*}
    	where
    	\begin{equation*}
    		A=\left(\langle X^iu,X^ju\rangle\right)_{i\in J_e,j\in J_o}=\psi(\eta(x))\quad(\eta\text{ is defined by \eqref{eq:invariant}}),
    	\end{equation*}
    	and
    	\begin{equation*}
    		(-1).\left(\begin{matrix}
    			Y_{1}&Y_{2}\\Y_{3}&Y_{4}
    		\end{matrix}\right)=\left(\begin{matrix}
    		-Y_{1}&Y_{2}\\Y_{3}&-Y_{4}
    		\end{matrix}\right),
    	\end{equation*}
    	which implies the lemma.
    \end{proof}
    
    \begin{prop}
    	\label{prop:descentmvw}
    	The stabilizer $\breve{G}_x\simeq\breve{G}(V^{\bot})$ and 
    	\begin{equation}
    		\label{eq:descentmvw}
    		N_{O,x}^{\g\times E}\simeq(\g(V^{\bot})\times V^{\bot})\oplus\triv^{2k+\gamma},
    	\end{equation}
    	as representations of $\breve{G}(V^{\bot})$.
    \end{prop}
    \begin{proof}
    	If $x=(0,0)$ the proposition is trivial. Assume that $x\neq (0,0)$. Note that $\breve{G}_x$ preserves $V$ and $V^{\bot}$. We have a homomorphism
    	\begin{equation*}
    		\breve{G}_x\to\breve{G}(V^{\bot}),\quad (g,\delta)\mapsto (g|_{V^{\bot}},\delta),
    	\end{equation*}
    	which has an inverse
    	\begin{equation*}
    		\breve{G}(V^{\bot})\to\breve{G}_x\subseteq\GL(E)\times\{\pm 1\},\quad(h,\delta)\mapsto(\tilde{h},\delta)
    	\end{equation*}
    	such that
    	\begin{equation*}
    		\tilde{h}|_{V^{\bot}}=h\quad\text{and}\quad\tilde{h}(X^iu)=\delta^{i+1}X^iu,\text{ for }0\leq i\leq\dim_{\C}V-1.
    	\end{equation*}
    	Therefore $\breve{G}_x\simeq\breve{G}(V^{\bot})$.
    	
    	Now we consider the action of $\breve{G}(V^{\bot})$ on $N_{O,x}^{\g\times E}$. As vector spaces,
    	\begin{equation*}
    		\g\times E\simeq(\g(V)\times V)\oplus(\g(V^{\bot})\times V^{\bot})\oplus\Hom_{\C}(V,V^{\bot}),
    	\end{equation*}
    	and
    	\begin{equation*}
    		T_xO\simeq\g(V)\oplus\Hom_{\C}(V,V^{\bot}).
    	\end{equation*}
    	The actions of $\breve{G}(V^{\bot})$ are given by
    	\begin{equation}
    		\label{eq:descentmvw1}
    		(h,\delta).((Y,v),(Y',v'),Z)=((\delta h'Yh'^{-1},\delta h'v),(\delta hY'h^{-1},\delta hv'),Z)\quad\text{on }\g\times E
    	\end{equation}
    	and
    	\begin{equation}
    		\label{eq:descentmvw2}
    		(h,\delta).(Y,Z)=(h'Yh'^{-1},Z)\quad\text{on }T_xO,
    	\end{equation}
    	where $h'=\tilde{h}|_V$. Comparing \eqref{eq:descentmvw1} and \eqref{eq:descentmvw2}, we obtain \eqref{eq:descentmvw} by Lemma \ref{lm:descent2}.
    \end{proof}
    
    \subsection{Descents at general $x\in\fX_{G}$}
    Fix $x\in\fX_{G}$. Write $x$ in the form \eqref{eq:closedorbitnilpotentgln} if $G=\GL_n(\C)$, and write $x=(X,u)$ in the form \eqref{eq:closedsom1} and \eqref{eq:closedsov} if $G=\Sp_{2n}(\C),\RO_{2n+1}(\C)$ or $\RO_{2n}(\C)$. Set
	\begin{equation*}
		x_i=(N_i,u^{(i)},v^{(i)})\quad\text{for }1\leq i\leq b,
	\end{equation*}
	If $G=\Sp_{2n}(\C),\RO_{2n+1}(\C)$ or $\RO_{2n}(\C)$, let
	\begin{equation*}
		x_0=\left(\left(\begin{matrix}
			N_1^{(0)}&N_2^{(0)}\\&N_3^{(0)}
		\end{matrix}\right),\left(\begin{matrix}
			u^{(0)}\\v^{(0)}
		\end{matrix}\right)\right),
	\end{equation*}
	let $G_0$ be defined by \eqref{eq:subgroup} and then let $\g_0\times E_0$ be the enhanced standard representation of $G_0$. For convenience of notations, if $G=\GL_n(\C)$, let $x_0=0$, let $G_0$ be the trivial group, let $\breve G_0=\{\pm 1\}$, and let $\g_0\times E_0$ be the zero space.
	
	Now we have
	\begin{equation*}
		\breve{G}_x\simeq\left(\breve{G}_0\right)_{x_0}\times_{\{\pm 1\}}\breve{\GL}_{n_1}(\C)_{x_1}\times_{\{\pm 1\}}\dots\times_{\{\pm 1\}}\breve{\GL}_{n_b}(\C)_{x_b}.
	\end{equation*}
	It is clear that
	\begin{equation*}
		N_{G_0x_0,x_0}^{\g_0\times E_0}\times N_{\GL_{n_1}(\C)x_1,x_1}^{\gl_{n_1}(\C)\times\C^{n_1\times 1}\times\C^{1\times n_1}}\times\dots\times N_{\GL_{n_b}(\C)x_b,x_b}^{\gl_{n_b}(\C)\times\C^{n_b\times 1}\times\C^{1\times n_b}}
	\end{equation*}
	is a subrepresentation of $N_{O,x}^{\g\times E}$. By \eqref{eq:descent1} and comparing dimensions, we obtain
	\begin{equation}
		\label{eq:descent2}
		N_{O,x}^{\g\times E}\simeq N_{G_0x_0,x_0}^{\g_0\times E_0}\times N_{\GL_{n_1}(\C)x_1,x_1}^{\gl_{n_1}(\C)\times\C^{n_1\times 1}\times\C^{1\times n_1}}\times\dots\times N_{\GL_{n_b}(\C)x_b,x_b}^{\gl_{n_b}(\C)\times\C^{n_b\times 1}\times\C^{1\times n_b}}
	\end{equation}
	
    \begin{proof}[Proof of Theorem \ref{thm:mainmvwsimple}]
        The theorem follows from Propositions \ref{prop:descentglnmvw}, \ref{prop:descentmvw} and \eqref{eq:descent2}.
    \end{proof}
	

\begin{thebibliography}{99}
		\bibitem[AGS]{AG}
		A. Aizenbud and D. Gourevitch, \textit{Generalized Harish-Chandra descent, Gelfand pairs, and an Archimedean analog of Jacquet-Rallis's theoremm, with an appendix by the authors and E. Sayag}, Duke Math. J., 149 (2009), no. 3, 509--567.
        \bibitem[AG]{AGb}
        A. Aizenbud and D. Gourevitch, \textit{Multiplicity one theorem for $(\GL_{n+1}(\R),\GL_n(\R))$}, Selecta Math., 15 (2009), 271--294.
        \bibitem[AGRS]{AGRS}
        A. Aizenbud, D. Gourevitch, S. Rallis and G. Schiffmann, \textit{Multiplicity one theorems}, Ann. Math. (2), 172 (2010), 1407--1434.  
        %\bibitem[AAG]{AAG}
        %A. Aizenbud, N. Avni and D. Gourevitch, \textit{Spherical pairs over close local fields}, Comment. Math. Helv. 87 (2012), no. 4, 929--962
		\bibitem[AH]{AH08}
		P. N. Achar and A. Henderson, \textit{Orbit closures in the enhanced nilpotent
		cone}, Adv. Math., 219 (2008), no. 1, 27--62. Correction in 228 (2011), no. 5 2984–2988.
		%\bibitem[AHJ]{AHJ}
		%P. N. Achar, A. Henderson, and B. F. Jones, \textit{Normality of orbit closures in the enhanced nilpotent cone}, Nagoya Math. J. 203 (2011), 1--45.
        \bibitem[BZSV]{BZSV}
         D. Ben-Zvi, Y. Sakellaridis and A. Venkatesh, \textit{Relative Langlands duality}, (2024), \eprint{arXiv:2409.04677}.
		\bibitem[CZ]{CZ21}
		P.-H. Chaudouard and M. Zydor, \textit{Le transfert singulier pour la formule des traces de Jacquet–Rallis}, Compos. Math. 157 (2021), no. 2, 303--434
		\bibitem[CM]{CM}
		D. Collingwood and W. McGovern, \textit{Nilpotent Orbits in Semisimple Lie Algebras}, Van Nostrand Reinhold Co., New York, 1993.
		\bibitem[FSR]{FSR}
		W. Ferrer Santos and A. Rittatore, \textit{Actions and Invariants of Algebraic Groups}, 2nd ed., Monographs and
		Research Notes in Mathematics, Chapman \& Hall/CRC, 2017.
        \bibitem[GGP]{GGP}
        W. T. Gan, B. H. Gross, and D. Prasad, \textit{Symplectic local root numbers, central critical L-values, and restriction problems in the representation theory of classical groups}, Ast\`erisque 346, (2012), 1--109.
        \bibitem[GK]{GK}
        I.M. Gelfand and D. Kazhdan, Representations of the group $\GL(n,K)$, where $K$ is a local field, Institute for Applied Mathematics, No. 942, 1971.
		\bibitem[GW]{GW}
		R. Goodment and N. Wallach, \textit{Symmetry, Representations, and Invariants}, Graduate Texts in Mathematics, 255. Springer-Verlag, Berlin-New York, 2009.
        \bibitem[JR]{JR}
        H. Jacquet and S. Rallis, \textit{On the Gross-Prasad conjecture for unitary groups}, in On Certain L-Functions, Clay Math. Proc. 13, Amer. Math. Soc., Providence, RI, 2011, 205–264.
		\bibitem[Kat]{Kato}
		S. Kato, \textit{An exotic Deligne–Langlands correspondence for symplectic groups}, Duke Math. J., 148 (2009), no. 2, 305--371.
		\bibitem[MVW]{MVW}
		C. Moeglin, M.-F. Vigneras, and J.-L. Waldspurger, Correspondence de Howe sur un corp p-adique, Lecture Notes in Mathematics, vol. 1291, Springer-Verlag, Berlin, 1987.
		\bibitem[Nis]{Nis14}
		K. Nishiyama, \textit{Enhanced orbit embedding}, Comment. Math. Univ. St. Pauli, 63 (2014), 223--232.
		\bibitem[NO]{NO}
		K. Nishiyama and T. Ohta, \textit{Enhanced adjoint actions and their orbits for the general linear group}, Pacific J. of Math., 298 (2019), no. 1, 141--155.
        \bibitem[Oht]{Oht}
        T. Ohta, \textit{An inclusion between sets of orbits and surjectivity of the restriction map of rings of invariants}, Hokkaido Math. J., 37 (2008), no. 3, 437–454.
		\bibitem[PV]{PV}
		V. L. Popov and E. B. Vinberg, \textit{Invariant Theory}, in \textit{Algebraic Geometry. IV}, A. N. Parshin and I. R. Shafarevich (eds.), Encyclopaedia of Mathematical Sciences, vol. 55, Springer-Verlag, Berlin, 1994, pp. 122--278.
		\bibitem[RS]{RS}
		S. Rallis and G. Schiffmann, \textit{Multiplicity one Conjectures}, \eprint{arXiv:0705.2168v1}.
		\bibitem[Sch]{Sch}
		G. W. Schwarz, \textit{Representations of simple Lie groups with regular rings of invariants}, Invent. Math., 49 (1978), 167--191.
        \bibitem[SZ1]{SZ1}
        B. Sun and C.-B. Zhu, \textit{A general form of Gelfand-Kazhdan criterion}, Manuscripta Math., 136 (2011), 185--197.
		\bibitem[SZ2]{SZ}
		B. Sun and C.-B. Zhu, \textit{Multiplicity on theorem: the Archimedean case}, Ann. of Math. (2), 175 (2012), 23--44.
		\bibitem[Sun]{Sun}
		B. Sun, \textit{Notes on MVW-extensions}, in \textit{Proceeding of the Fifth International Congress of Chinese Mathematicians}, L. Ji, Y. S. Poon, L. Yang and S.-T. Yau (eds.), AMS/IP Studies in Advanced Mathematics, vol. 51, part 1, American Mathematical Society, Providence, 2012, pp. 305--312.
		\bibitem[Xue]{Xue19}
		H. Xue, \textit{On the global Gan--Gross--Prasad conjecture for unitary groups: approximating smooth transfer of Jacquet--Rallis}, J. Reine Angew. Math. 756 (2019), 65--100.
		\bibitem[Zha]{Zha14}
		W. Zhang, \textit{Fourier transform and the global Gan-Gross-Prasad conjecture for unitary groups}, Ann. of Math. (2), 180 (2014), no. 3, 971--1049. 
	\end{thebibliography}
\end{document}